\newtheorem{thm}{Theorem}[section]
\newtheorem{prop}[thm]{Proposition}
\newtheorem{cor}[thm]{Corollary}
\theoremstyle{definition}
\newtheorem{function}{Function}[section]
\theoremstyle{remark}
\newcommand\Wr{{\rm wr\, }}
\DeclareMathOperator\PSL{{\rm PSL}}
\DeclareMathOperator\PGL{{\rm PGL}}
\DeclareMathOperator\AGL{{\rm AGL}}
\DeclareMathOperator\PGammaL{{\rm P}\Gamma{\rm L}}
\title[Covering Numbers of Small Symmetric Groups]{On the Covering Number of Small Symmetric Groups and Some Sporadic Simple Groups}
\author{Luise-Charlotte Kappe}
\address{Department of Mathematical Sciences, State University of New York at Binghamton, Binghamton, NY 13902-6000, USA.} 
\email{menger@math.binghamton.edu}
\author{Daniela Nikolova-Popova}
\address{Department of Mathematial Sciences, Florida Atlantic University, Boca Raton, FL 33431, USA.} 
\email{dpopova@fau.edu}
\author{Eric Swartz}
\address{ %
Centre for the Mathematics of Symmetry and Computation, School of Mathematics and Statistics,
The University of Western Australia,
35 Stirling Highway, Crawley, W.A. 6009, Australia.}
\email{eric.swartz@uwa.edu.au}
\subjclass[2010]{Primary 20D06, 20D60, 20D08, 20-04, Secondary 20F99}
\keywords{symmetric groups, sporadic simple groups, finite union of proper subgroups, minimal number of subgroups}
\begin{document}

\begin{abstract}
A set of proper subgroups is a covering for a group if its union is the whole group.  The minimal number of subgroups needed to cover $G$ is
called the covering number of $G$, denoted by $\sigma(G)$. Determining $\sigma(G)$ is an open problem for many non-solvable groups.  For
symmetric groups $S_n$, Mar{\'o}ti determined $\sigma(S_n)$ for odd $n$ with the exception of $n=9$ and gave estimates for $n$ even.  In this
paper we determine $\sigma(S_n)$ for $n = 8$, 9, 10 and 12.  In addition we find the covering number for the Mathieu group $M_{12}$ and
improve an estimate given by Holmes for the Janko group $J_1$. 
\end{abstract}

\maketitle

\section{Introduction} 
\label{S:1}

Let $G$ be a group and ${\mathcal A} = \{A_i \mid 1\leq i\leq n\}$ a collection of proper subgroups of $G$.  If $G = \displaystyle{\bigcup^n_{i=1}}A_i$,
then ${\mathcal A}$ is called a cover of $G$.  A cover is called irredundant if after the removal of any subgroup, the remaining subgroups do
not cover the group.  A cover of size $n$ is said to be minimal if no cover of $G$ has fewer than $n$ members.  According to J.H.E.\ Cohn
\cite{C}, the size of a minimal covering of $G$ is called the covering number, denoted by $\sigma(G)$.  By a result of B.H.\ Neumann \cite{N},
a group is the union of finitely many proper subgroups if and only if it has a finite noncyclic homomorphic image.  Thus it suffices to
restrict our attention to finite groups when determining covering numbers of groups.  

Determining the invariant $\sigma(G)$ of a group $G$ and finding the positive integers which can be covering numbers is the topic of ongoing
research.  It even predates Cohn's 1994 publication \cite{C}.  It is a simple exercise to show that no group is the union of two proper
subgroups. Already in 1926, Scorza \cite{Sc} proved that $\sigma(G)=3$ if and only if $G$ has a homomorphic image isomorphic to the Klein-Four
group, a result many times rediscovered over the years.  In \cite{Gr1}, Greco characterizes groups with $\sigma(G) = 4$ and in \cite{Gr2} and
\cite{Gr3} gives a partial characterization of groups with $\sigma(G) = 5$.  For further details we refer to the survey article by Serena
\cite{Se}, and for recent applications of this research see for instance \cite{B} and \cite{BEGHM}.  

In \cite{C}, Cohn conjectured that the covering number of any solvable group has the form $p^\alpha+1$, where $p$ is a prime and $\alpha$ a
positive integer, and for every integer of the form $p^\alpha+1$ he determined a solvable group with this covering number.  In \cite{T},
Tomkinson proves Cohn's conjecture and suggests that it might be of interest to investigate minimal covers of non-solvable and in particular
simple groups.  Bryce, Fedri and Serena \cite{BFS} started this investigation by determining the covering number for some linear groups such
as PSL$(2,q)$, PGL$(2,q)$ or GL$(2,q)$ after Cohn \cite{C} had already shown that $\sigma(A_5) = 10$ and $\sigma(S_5) = 16$. In \cite{L},
Lucido investigates Suzuki groups and determines their covering numbers.  For the sporadic groups, such as $M_{11}$, $M_{22}$, $M_{23}$, $Ly$
and $O'N$, the covering numbers 
are established in \cite{H} by Holmes and she gives estimates for those of $J_1$ and $M^cL$.  Some of the results in \cite{H} are
established with the help of GAP \cite{Ga}, a first in this context.

The covering numbers of symmetric and alternating groups were investigated by Mar{\'o}ti in \cite{M}.  For $n\neq 7$, 9, he shows that for the
alternating group $\sigma(A_n) \geq 2^{n-2}$ with equality if and only if $n$ is even but not divisible by 4.  For $n=7$ and 9 Mar{\'o}ti
establishes $\sigma(A_7) \leq 31$ and $\sigma(A_9) \geq 80$.  For the symmetric groups he proves that $\sigma(S_n) = 2^{n-1}$ if $n$ is odd
unless $n=9$ and $\sigma(S_n) \leq 2^{n-2}$ if $n$ is even.  It is a natural question to ask what are the exact covering numbers for
alternating and symmetric groups for those values of $n$ where Mar{\'o}ti only gives estimates.  In \cite{KR} and \cite{EMN} this was done for
alternating groups in case of small values of $n$.  As mentioned earlier,
Cohn \cite{C} already established $\sigma(A_5) = 10$.  In \cite{KR} it is shown that $\sigma(A_7) = 31$ and $\sigma(A_8) = 71$.  Furthermore,
Mar{\'o}ti's bound for $A_9$ is improved by establishing that $127 \leq \sigma(A_9) \leq 157$.  Recently, it was shown in \cite{EMN} that
$\sigma(A_9) = 157$.

The topic of this paper is to determine the covering numbers for symmetric groups of small degree and some sporadic simple groups.  We determine the covering numbers for $S_n$ in cases when $n = 8$, $9$, $10$, and $12$.  In
particular, we show $\sigma(S_9) = 256$, establishing that Mar{\'o}ti's result that $\sigma(S_n) = 2^{n-1}$ for odd $n$ holds without exceptions.  For $n
= 8$, $10$ and $12$ we have $\sigma(S_8) = 64$, $\sigma(S_{10}) = 221$, and $\sigma(S_{12}) = 761$, respectively.  We observe that Mar{\'o}ti
\cite{M} gave already 761 as an upper bound for $\sigma(S_{12})$.  Since we can use the same methods, we establish in
addition that the Mathieu group $M_{12}$ has covering number $208$ and improve the estimate given for the Janko group $J_1$ in \cite{H}.

Observing that $\sigma(S_4) = 4$ and $\sigma(S_6) = 13$ by \cite{AAS}, we know now the covering numbers of $S_n$ for all even $n \leq 12$ and
observe that in this range $\sigma(S_n) = 2^{n-2}$, Mar{\'o}ti's upper bound, is only taken if $n$ is a 2-power.  In the remaining cases we
have $\sigma(S_n) < 2^{n-2}$ and $\sigma(S_n) \sim \frac{1}{2} {n \choose {n/2}}$.  This suggests that perhaps the value for $\sigma(S_n)$ is less than Mar{\'o}ti's bound in case $n$ is not a
2-power.  Our current methods rely on explicit tables for the symmetric groups in question and computer calculation to carry out certain
optimizations.  There are limits to the size of the group on how far these methods can carry us and statements for general values of $n$ are extremely difficult and require entirely different methods than those used for small values of $n$.  This will become clearer when we discuss our methods in the following.

The methods employed here are an extension of those used in \cite{KR}.  In determining a minimal covering of a group we can restrict ourselves
to finding a minimal covering by maximal subgroups.  The conjugacy classes of subgroups for the groups in question can be found in GAP
\cite{Ga}.  To determine a minimal covering by maximal subgroups, it suffices to find a minimal covering of the conjugacy classes of
maximal cyclic subgroups by such subgroups of the group.  Already in \cite{H} this method is used to determine the covering numbers of
sporadic groups.  Here this method is adapted to the case of symmetric groups where the generators of maximal cyclic subgroups can easily be
identified by their cycle structure. 

The following notation is used for the disjoint cycle decomposition of a nontrivial permutation.  Let $m_1,m_2,\hdots,m_t\in {\mathbb N}$ with
$1 < m_1 < m_2 < \hdots < m_t$ and $k_1,\hdots,k_t\in {\mathbb N}$.  If $\alpha$ is a permutation with disjoint cycle decomposition of $k_i$
cycles of length $m_i$, $i = 1,\hdots,t$, then we denote the class of $\alpha$ by $(m_1^{k_1},\hdots,m_t^{k_t})$.  If $k_i = 1$, we just write
$m_i$ instead of $m_i^1$. As is customary, we suppress 1-cycles and the identity permutation is denoted by (1).  For example, the permutation
with disjoint cycle decomposition (12)(34)(5678) belongs to the class $(2^2,4)$.  In the case of symmetric groups all elements of a given cycle structure are contained in the subgroups of a conjugacy class of maximal subgroups and the elements with the respective cycle
structure are either partitioned into these subgroups or there exists an intersection between some of the subgroups of the conjugacy class.

For the groups $S_8$, $S_9$, $S_{10}$, and $M_{12}$, we provide two tables which are obtained with the help of GAP \cite{Ga}.  (For the group $S_{12}$, we provide only a list of maximal subgroup conjugacy classes and refer to previous work in \cite{M}.  For the group $J_1$, we refer to previous work in \cite{H}.)  The first table gives the
information on the conjugacy classes of maximal subgroups of the group, such as the isomorphism type and order of the class representative and
the size of each class.  The second table lists the order and cycle structure of each permutation generating a maximal cyclic subgroup as well
as the total of such elements in the group together with the distribution of these elements over the various conjugacy classes.  For each
conjugacy class we list how many of these elements are contained in a class representative.  If elements are partitioned over the
representatives, we indicate this with $P$, and if each element is contained in $k$ class representatives and each representative contains $s$
such elements, we indicate this with $s_k$.  For some of the groups it suffices to give the second table in abbreviated form.

For finding the covering number, the goal is to determine an irredundant covering and show that it is minimal.  If the elements of a certain
cycle structure are partitioned into the subgroups of a particular conjugacy class, it is not hard to find a minimal covering for such
elements.  The difficulty arises if the elements in question occur in several class representatives.  In this case we interpret the subgroups
and group elements as an incidence structure with the subgroup representatives as the sets and the group elements with the specific cycle
structure as elements.  This leads to a problem in linear optimization.  Here are some of the details.

Given two finite collections of objects, call them $U$ and $V$.  Call the objects in $V$ elements and the objects in $U$
sets.  Given an incidence structure between $U$ and $V$, that is for every $v$ in $V$ and every $u$ in $U$ we have
either $v$ incident with $u$ or $v$ not incident with $u$, $v\in u$ or $v\not\in u$ for short.  This relation can be
represented by a matrix $A = (a_{ij})$, the incidence matrix of $(V,U)$.  We label the columns of $A$ by the sets in
$U$ and the rows by the elements in $V$.  For $1\leq i \leq |V|$ and $1\leq j\leq |U|$ we set
\begin{equation*}
a_{ij} = \begin{cases} 1 &\text{if $v_i\in u_j$},\\
0 &\text{if $v_i\not\in u_j$.}\end{cases}
\end{equation*}
Let $W$ be a subcollection of $U$.  We define a column vector $x(W) = (x_1,\hdots,x_{|U|})^T$ as follows 
\begin{equation*}
x_j = \begin{cases} 1 &\text{if $u_j\in W$},\\
0 &\text{if $u_j\not\in W$.}\end{cases}
\end{equation*}
Let $Ax(W) = y(W) = (y_1,\hdots,y_{|V|})^T$, a column vector of length $|V|$ with coordinates $y_i \geq 0$.  If $y_i
= 0$, then $v_i \not\in \displaystyle{\bigcup_{u\in W}}u$ and if $y_i > 0$, then $v_i \in \displaystyle{\bigcup_{u\in W}}u$,
specifically $v_i$ is contained in exactly $y_i$ members of $W$.  If $y_i > 0$ for $i = 1,\hdots,|V|$, every $v_i\in
V$ is contained in at least one member of $W$ and we say $W$ covers $V$.  

In our interpretation the objects in $U$ are representatives of a certain conjugacy class of maximal subgroups and the objects in $V$ are
permutations with a certain cycle structure.  The goal is to find the minimal size of $|W|$ such that $W$ covers $V$.
If the objects in $U$ and $V$ can be suitably labeled, we can use a combinatorial argument to
find the optimal solution, e.g.,\ using the Erd\H{o}s-Ko-Rado Theorem \cite{E} as in the case of $S_{10}$.  Otherwise we have to resort to the
help of computers to find optimal solutions, e.g.,\ in the case of $S_9$, $M_{12}$ and $J_1$.  Roughly speaking, a system of linear inequalities with binary variables is prepared by GAP \cite{Ga} and the optimal solution is found with the help of Gurobi \cite{Gu}.  Naturally, this approach puts a limit on how
large our groups can be.  In addition, the structure of the cover heavily depends on the arithmetic nature of $n$.

\section{The Symmetric Group $S_8$}
\label{S:2}

The smallest symmetric group for which the covering number is not known is $S_8$.  Here we determine $\sigma(S_8)$ and show that it equals the upper bound given by Mar{\'o}ti
in \cite{M}.  

\begin{thm}  The covering number of $S_8$ is $64$.
\end{thm}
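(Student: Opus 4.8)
Since $n=8$ is even, Mar\'oti's estimate $\sigma(S_n)\le 2^{n-2}$ already gives $\sigma(S_8)\le 64$, and this bound is realized concretely by the union of $A_8$, the $35$ conjugates of the imprimitive subgroup $S_4\Wr S_2$, and the $28$ conjugates of the set stabilizer $S_2\times S_6$, which one checks covers $S_8$ and has size $1+35+28=64$. Thus the whole content lies in the matching lower bound $\sigma(S_8)\ge 64$. Following the method of the introduction, I would first reduce to covers by maximal subgroups and read off from GAP the classes of maximal subgroups, namely $A_8$, the point stabilizer $S_7$, the set stabilizers $S_2\times S_6$ and $S_3\times S_5$, the imprimitive groups $S_4\Wr S_2$ and $S_2\Wr S_4$, and the primitive groups $\PGL(2,7)$ and $\AGL(3,2)$. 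It then suffices to cover the generators of the maximal cyclic subgroups, which are pinned down by cycle type, and the second table would record for each relevant type its total and its distribution over these classes.

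I would then extract the lower bound from three cycle types. First, the $5040$ elements of type $(8)$ are the only transitive odd permutations and lie only in $S_4\Wr S_2$, $S_2\Wr S_4$ and $\PGL(2,7)$; since a conjugate of these classes contains at most $144$, $48$ and $84$ of them respectively, and $5040/144=35$, at least $35$ subgroups are needed to catch all $8$-cycles. Second, the $1120$ elements of type $(2,3^2)$ lie only in $S_2\times S_6$ and $S_3\times S_5$: they have no fixed point (ruling out $A_8$ and $S_7$), no invariant $4$-set (ruling out $S_4\Wr S_2$), an odd $3$-cycle (ruling out $S_2\Wr S_4$), and the order-$6$ elements of the primitive groups turn out to be of type $(6)$ or $(2,6)$ rather than $(2,3^2)$; since each of the two admissible conjugates contains exactly $40$ of them, at least $1120/40=28$ subgroups are forced. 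Crucially these two families of subgroups are disjoint, so covering the odd permutations already costs at least $35+28=63$. Third, the $2688$ elements of type $(3,5)$ have order $15$ and lie only in $A_8$ and in the $56$ conjugates of $S_3\times S_5$, each in exactly one of the latter.

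These facts assemble as follows. If a cover omits $A_8$, then the order-$15$ elements force all $56$ conjugates of $S_3\times S_5$, and together with the (disjoint) $\ge 35$ subgroups forced by the $8$-cycles this already exceeds $64$; if the cover contains $A_8$, it must still cover every odd permutation, which by the second paragraph costs at least $63$, again totalling at least $64$. Either way $\sigma(S_8)\ge 64$, matching the upper bound. The main obstacle is making the two counting lemmas airtight: one must verify the exact per-conjugate incidence numbers ($144$, $48$, $84$ for type $(8)$ and $40$ for type $(2,3^2)$) and, above all, confirm the disjointness by computing the cycle types of the order-$8$ and order-$6$ elements of $\PGL(2,7)$ and $\AGL(3,2)$, since a single class meeting both families would destroy the clean additivity $35+28=63$. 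The remaining odd types, namely $(6)$, $(2,5)$ and those fixing some $4$-set, are then checked to be absorbed for free by the forced subgroups, which is exactly what keeps the bound tight at $64$.
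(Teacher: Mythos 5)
Your upper bound (the cover $A_8\cup MS3\cup MS6$ of size $1+28+35=64$) and your treatment of the case where $A_8$ is omitted (all $56$ conjugates of $S_3\times S_5$ are forced by the $(3,5)$ elements, plus at least $35$ disjoint subgroups for the $8$-cycles) both match the paper. The fatal problem sits exactly at the point you yourself flagged as the main obstacle, but not where you looked for trouble: the disjointness claim is false, and it fails at $S_2\Wr S_4$, not at the primitive groups. An element of type $(2,3^2)$ \emph{does} lie in conjugates of $S_2\Wr S_4$; for instance $(1\,2)(3\,4\,5)(6\,7\,8)$ stabilizes the partition $\{1,2\},\{3,6\},\{4,7\},\{5,8\}$ (the two $3$-cycles permute three $2$-blocks cyclically), and in fact each such element stabilizes exactly three partitions into pairs. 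This is recorded in the paper's Table 2.2 as the entry $32_3$ in column $MS5$: each of the $105$ subgroups in the class $S_2\Wr S_4$ contains $32$ elements of type $(2,3^2)$ \emph{in addition to} $48$ of the $8$-cycles. (A smaller slip: $\AGL(3,2)$ is a maximal subgroup of $A_8$, not of $S_8$, so it should not appear in your list at all.) Consequently the two families of admissible subgroups are not disjoint and the additivity $35+28=63$ does not follow.

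This is not a repairable bookkeeping error, because per-subgroup counting alone can no longer yield the bound. A hypothetical cover could use subgroups of $S_2\Wr S_4$ to attack both element types simultaneously: as far as counting is concerned, $35$ such subgroups could cover all $1120$ elements of type $(2,3^2)$ (since $35\cdot 32 = 1120$) while also absorbing $35\cdot 48=1680$ of the $8$-cycles, and the remaining $3360$ eight-cycles could then be met by $24$ conjugates of $S_4\Wr S_2$ (since $24\cdot 144 = 3456 \ge 3360$), for a total of $59<63$ subgroups on the odd permutations. Ruling out such configurations requires structural information about how the elements are distributed, not just maxima per subgroup. That is exactly why the paper argues differently: it fixes the explicit cover $\mathcal{C}$, notes that the elements of types $(8)$, $(3,5)$, $(2,3^2)$ are \emph{partitioned} among the $64$ subgroups of $\mathcal{C}$, shows first that any smaller cover must contain $A_8$ (your omit-$A_8$ case), and then runs an exchange argument: any replacement subgroup from outside $\mathcal{C}$ contains at most $80$ of the relevant elements (the maximum $80=48+32$ being attained precisely by $S_2\Wr S_4$), versus $144$ per discarded subgroup of $MS6$, while no subgroup of $S_8$ contains more than $40$ elements of type $(2,3^2)$, matching the $40$ per subgroup of $MS3$. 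Your third paragraph would need to be replaced by an argument of this kind; the statement you want (at least $63$ subgroups for the odd permutations) is true, but your proof of it is broken.
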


\begin{proof} 
First we will show that there exists an irredundant covering of $S_8$ by $64$ subgroups. As can be seen from Table 2.2, 
all odd permutations of the 
group generating maximal cyclic subgroups are contained either in $MS3$ or $MS6$. Thus the union of $MS3$ and $MS6$ 
contains all odd permutations in question. We observe that this union does not contain all even permutations 
generating maximal cyclic subgroups, e.g., the permutation with cycle structure $(3,5)$ is only contained in $MS1$ and 
$MS2$. Thus $MS1$, $MS3$, and $MS6$ cover all of $S_8$, and
\begin{equation*}
\sigma(S_8) \leq |MS1| + |MS3| + |MS6| = 64.
\end{equation*}
Let $\mathcal C$ be the union of $MS1$, $MS3$, and $MS6$, and define $\Pi$ to be the union of all elements with cycle structure $(8), (3,5)$, or $(2,3^2)$.   The elements of 
$\Pi$ are partitioned among the $64$ groups of $\mathcal{C}$, so $\mathcal{C}$ is an irredundant covering.  


It remains to be shown that ${\mathcal C}$ is a minimal covering.  Assume to the contrary that there exists a cover $\mathcal{B}$ of $S_8$ such that $\mathcal{B}$ contains 
fewer subgroups than $\mathcal{C}$.  Since $\mathcal{B}$ covers all the elements of $S_8$, it must cover all the elements of $\Pi$.  Moreover, $\mathcal{B}$ contains fewer 
subgroups than $\mathcal{C}$, so we may assume that $\mathcal{C} = (\mathcal{B} \cap \mathcal{C}) \cup \mathcal{C}'$ and $\mathcal{B} = (\mathcal{B} \cap \mathcal{C}) \cup 
\mathcal{B}'$, where $\mathcal{C}'$ is the set of subgroups in $\mathcal{C}$ but not in $\mathcal{B}$ and $\mathcal{B}'$ is the set of subgroups in $\mathcal{B}$ that are 
not in $\mathcal{C}$.  Since $|\mathcal{B}| < |\mathcal{C}|$, it must be that $|\mathcal{B}'| < |\mathcal{C}'|$.  This means that $\mathcal{B}'$ must cover some subset of 
elements of $\Pi$ more efficiently than does $\mathcal{C}'$.    

We will now show that $A_8$ is in ${\mathcal B} \cap {\mathcal C}$.  If $A_8\not\in {\mathcal B}\cap {\mathcal C}$, then the only other way to cover the elements of cycle
structure $(3,5)$ is by the 56 subgroups of $MS2$.  Since the most efficient way to cover the 8-cycles is by the 35 subgroups of $MS6$ and no maximal subgroup contains both elements of cycle structure $(3,5)$ and $8$-cycles, we have $|{\mathcal B}| \geq 56 + 35 >
|{\mathcal C}|$, a contradiction.  We conclude $A_8 \in {\mathcal B}\cap {\mathcal C}$.  

Define $\mathcal{C}_1'$ to be the set of subgroups of $\mathcal{C}'$ that are in $MS3$ and $\mathcal{C}_2'$ to be the set of subgroups of $\mathcal{C}'$ in $MS6$, and let 
$\Pi_1$ and $\Pi_2$, respectively, be the elements of $\Pi$ that are in subgroups of $\mathcal{C}_1'$ and ${\mathcal C}'_2$, respectively.  Note that $\Pi_1$ and $\Pi_2$ 
are disjoint since the elements of $\Pi$ are partitioned among the subgroups of $\mathcal{C}$.  As can be seen from examining Table 2.2, the maximal subgroups of $S_8$ 
that are not in $\mathcal{C}$ (i.e., those isomorphic to $S_3 \times S_5$, $S_7$, $\PGL(2,7)$, or $S_2 \Wr S_4$) contain at most $80$ elements of
$\Pi_1 \cup \Pi_2$, whereas a 
subgroup isomorphic to $S_4 \Wr S_2$ in $MS6$ contains $144$ elements of this set.  Since $\mathcal{C}$ partitions $\Pi_1\cup \Pi_2$, this means that if $\mathcal{C}_2'$ contains $n$ 
subgroups, then $\mathcal{B}'$ must contain at least $n+1$ subgroups to cover the elements in $\Pi_2$.  Since $|\mathcal{B}'| < |\mathcal{C}'|$, this means both that 
$\mathcal{C}_1'$ is nonempty and that some collection $\mathcal{B}_1'$ of $\mathcal{B}'$ covers the elements of $\Pi_1$ with fewer subgroups than $\mathcal{C}_1'$.  
However, $\Pi_1$ consists only of elements with cycle structure $(2,3^2)$, and each subgroup of $\mathcal{C}_1'$ contains exactly $40$ such elements.  For $\mathcal{B}_1'$ 
to be smaller than $\mathcal{C}_1'$, some subgroup of $S_8$ would have to contain more than $40$ elements with cycle structure $(2,3^2)$.  None does, which is a 
contradiction.  Therefore it follows that no such cover $\mathcal{B}$ can exist and $\mathcal{C}$ is a minimal covering of $S_8$. We conclude $\sigma(S_8) = 64$, as desired.

\end{proof}

\begin{center}
\begin{tabular}{c|c|c|c}
Label &Isomorphism Type &Group Order &Class Size\\
\hline
$MS1$ &$A_8$ &20160 &1\\
$MS2$ &$S_3\times S_5$ &720 &56\\
$MS3$ &$S_2\times S_6$ &1440 &28\\
$MS4$ &$S_7$ &5040 &8\\
$MS5$ &$S_2 \Wr S_4$ &384 &105\\
$MS6$ &$S_4 \Wr S_2$ &1152 &35\\
$MS7$ &$\PGL(2,7)$ &336 &120\\
\end{tabular}
\vskip 10pt

Table 2.1.  Conjugacy classes of maximal subgroups of $S_8$.
\end{center}
\vskip 20pt

\begin{center}
\begin{tabular}{c|c|c|c|c|c|c|c|c|c}
 & & &$MS1$ &$MS2$ &$MS3$ &$MS4$ &$MS5$ &$MS6$ &$MS7$\\
Order &C.S. &Size &(1) &(56) &(28) &(8) &(105) &(35) &(120)\\
\hline
ODD & & & & & & & & &\\
\hline
4 &$(2^2,4)$ &1260 &0 &0 &$90_2$ &0 &$36_3$ &$180_5$ &0\\
6 &(2,3) &1120 &0 &$100_5$ &$160_4$ &$420_3$ &0 &$96_3$ &0\\
6 &$(2,3^2)$ &1120 &0 &$40_2$ &$40,P$ &0 &$32_3$ &0 &0\\
6 &6 &13360 &0 &0 &$120,P$ &$840_2$ &$32,P$ &0 &$56_2$\\
8 &8 &5040 &0 &0 &0 &0 &$48,P$ &$144,P$ &$84_2$ \\
10 &(2,5) &4032 &0 &$72,P$ &$144,P$ &$504,P$ &0 &0 &0\\
12 &(3,4) &3360 &0 &$60,P$ &0 &$420,P$ &0 &$96,P$ &0\\
\hline
EVEN & & & & & & & & &\\
\hline
4 &(2,4) &2520 &$P$ &$90,P$ &$180_2$ &$630_2$ &$24,P$ &$72,P$ &0\\
6 &(2,6) &3360 &$P$ &0 &$120,P$ &0 &$32,P$ &$192_2$ &0\\
7 &7 &5760 &$P$ &0 &0 &$720,P$ &0 &0 &$48,P$\\
15 &(3,5) &2688 &$P$ &$48,P$ &0 &0 &0 &0 &0\\
\end{tabular}
\vskip 10pt

Table 2.2.  Inventory of elements generating maximal cyclic subgroups 

in $S_8$ across conjugacy classes of maximal subgroups.
\end{center}
\vskip 10pt

\section{The Symmetric Group $S_9$}
\label{S:3}

In this section we will determine the exact covering number of $S_9$, the case missing in \cite{M}, where the covering numbers for $S_n$ with $n$ odd
were determined with the exception of $n=9$.  

\begin{thm}\label{T:3.1}  The covering number of $S_9$ is $256$.
\end{thm}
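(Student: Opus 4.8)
The plan is to mirror the two-part strategy used for $S_8$: first exhibit an explicit irredundant cover of size $256$ to obtain $\sigma(S_9)\le 256$, and then establish the matching lower bound $\sigma(S_9)\ge 256$, which is where the real work lies. Since $256=2^{8}=2^{9-1}$, confirming this value shows that Mar\'oti's formula $\sigma(S_n)=2^{n-1}$ for odd $n$ holds with no exceptions. As before, I would begin by generating with GAP the two tables: one recording the conjugacy classes of maximal subgroups of $S_9$ (namely $A_9$, the intransitive set-stabilizers $S_k\times S_{9-k}$ for $k=1,2,3,4$, the imprimitive group $S_3\Wr S_3$, and the primitive groups $\PGammaL(2,8)$ and $\AGL(2,3)$, with their class sizes), and one recording, for each cycle structure generating a maximal cyclic subgroup, how its elements distribute across these classes.

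For the upper bound I would take $\mathcal{C}$ to consist of the $255=\binom{9}{1}+\binom{9}{2}+\binom{9}{3}+\binom{9}{4}$ intransitive subgroups $S_k\times S_{9-k}$ together with $A_9$. Every permutation that is not a single $9$-cycle stabilizes a proper nonempty subset, hence lies in some intransitive member of $\mathcal{C}$, while every $9$-cycle is even and lies in $A_9$; thus $\mathcal{C}$ covers $S_9$ and $\sigma(S_9)\le 256$. To see that $\mathcal{C}$ is irredundant I would use that the elements of cycle structure $(8)$, $(2,7)$, $(3,6)$, and $(4,5)$ each have only two proper nonempty invariant subsets (the two cycle supports), so each such element lies in a \emph{unique} intransitive maximal subgroup; these four families are therefore partitioned among the four intransitive classes, and the $9$-cycles force $A_9$, so no member of $\mathcal{C}$ can be dropped.

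The core of the argument is the lower bound, which I would organize in the incidence-structure framework of the introduction. First I would show that $A_9$ must belong to every minimal cover: the $9$-cycles are even, and apart from $A_9$ the only maximal subgroups containing them are the comparatively small classes $\PGammaL(2,8)$ and $S_3\Wr S_3$; a count of the $8!$ nine-cycles against the number of order-$9$ elements in a single such subgroup shows that covering all $9$-cycles without $A_9$ would already require on the order of hundreds of subgroups, far exceeding $256$. Once $A_9$ is forced, it covers no odd permutation, so $\mathcal{B}\setminus\{A_9\}$ must cover every odd element generating a maximal cyclic subgroup. I would then build the $0/1$ incidence matrix $A$ between these odd elements and the non-$A_9$ maximal subgroups and set up the integer program of minimizing $\sum_j x_j$ subject to $A\,x\ge \mathbf{1}$ with $x_j\in\{0,1\}$, and invoke Gurobi to certify that its optimum is $255$. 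Combined with the forced $A_9$, this yields $\sigma(S_9)\ge 256$.

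The main obstacle, as signalled in the introduction, is precisely this optimization step. Unlike $S_8$, there is no clean partition argument that resolves $S_9$ by hand, because the imprimitive subgroups $S_3\Wr S_3$ and the primitive subgroups $\PGammaL(2,8)$ and $\AGL(2,3)$ each absorb moderately many elements of several odd cycle structures, so the optimum cannot be read off a single dominant class. The delicate point is to select a family of odd cycle structures small enough to keep the program tractable yet rich enough that its optimum genuinely equals the global minimum over all maximal-subgroup covers, and then to verify that Gurobi's integer optimum of $255$ is an honest lower bound (not merely an LP-relaxation value) for the number of non-$A_9$ subgroups needed. Establishing that this optimum is attained exactly by the $255$ intransitive subgroups is the step where the computation, rather than a combinatorial identity, carries the proof.
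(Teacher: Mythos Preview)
Your outline would work, but the paper's route is more surgical and keeps the computation tiny. Rather than a single integer program over all odd maximal-cyclic generators, the paper reads off from Table~3.2 that $(4,5)$-elements lie \emph{only} in $MS2\cong S_4\times S_5$ (partitioned there) and $(2,7)$-elements lie \emph{only} in $MS4\cong S_2\times S_7$ (partitioned there), so all $126+36$ of those subgroups are forced in any cover with no computation whatsoever. The $8$-cycles are disposed of by a two-line count comparing the nine copies of $S_8$ in $MS5$ against $MS7\cong\AGL(2,3)$. The machine is invoked only for the $(3,6)$-elements (Proposition~\ref{P:3.2}), a far smaller instance involving just the subgroups of $MS3$, $MS6$, $MS7$, and Gurobi certifies that $84$ are needed. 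A closing remark that $MS7$ is the only class meeting both the $8$-cycle and the $(3,6)$ problems, and that each $\AGL(2,3)$ contains only $108+72=180$ such elements against at least $240$ in a member of $MS3$ or $MS5$, seals $126+36+9+84=255$ for the odd side. Your monolithic IP would reach the same $255$, but it obscures the fact that only the $(3,6)$ class genuinely resists a by-hand argument.

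Two corrections to your setup. First, $\PGammaL(2,8)$ is not a maximal subgroup of $S_9$: every element is even (an involution of $\PSL(2,8)$ on the nine points of $\mathrm{PG}(1,8)$ is a product of four transpositions, and the Frobenius automorphism acts as two $3$-cycles), so it sits inside $A_9$ and does not appear in Table~3.1. Second, your forcing of $A_9$ needs sharpening: $9$-cycles \emph{do} lie in copies of $S_3\Wr S_3$, so a bare counting estimate against several classes is not the argument. The clean statement is that a $9$-cycle $\sigma$ preserves a unique $3{+}3{+}3$ block system (the cycles of $\sigma^{3}$), so the $8!=40320$ nine-cycles are partitioned among the $280$ subgroups of $MS6$; hence any cover omitting $A_9$ already uses at least $280>256$ subgroups just for the $9$-cycles, and $A_9$ is forced.
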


This together with Theorem 1.1 in \cite{M} yields the following corollary.

\begin{cor} Let $n \geq 3$ be an odd integer.  Then $\sigma(S_n) = 2^{n-1}$.
\end{cor}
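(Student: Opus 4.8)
The plan is to obtain the corollary as an immediate synthesis of two results: Mar\'oti's Theorem~1.1 in \cite{M}, which already handles every odd degree except one, and Theorem~\ref{T:3.1} of the present paper, which resolves the exceptional degree. Recall from the introduction that Mar\'oti proved $\sigma(S_n) = 2^{n-1}$ for all odd $n \geq 3$ with the sole exception of $n = 9$, a value his techniques could not settle. The odd integers $n \geq 3$ therefore split into two disjoint cases: $n \neq 9$, governed by \cite{M}, and $n = 9$, the single leftover value.

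First I would apply Mar\'oti's theorem directly to conclude that $\sigma(S_n) = 2^{n-1}$ whenever $n \geq 3$ is odd and $n \neq 9$. Next I would record that Theorem~\ref{T:3.1} establishes $\sigma(S_9) = 256$, and since $256 = 2^8 = 2^{9-1}$, the formula $\sigma(S_n) = 2^{n-1}$ holds at $n = 9$ as well. Because these two cases exhaust all odd $n \geq 3$, the identity $\sigma(S_n) = 2^{n-1}$ holds for every such $n$ without exception, which is exactly the claim.

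The substantive difficulty in this corollary is not in the combination step, which is purely logical, but lies entirely in Theorem~\ref{T:3.1}: closing the $n = 9$ gap requires producing an explicit irredundant cover of $S_9$ by $256$ maximal subgroups and then proving its minimality through the incidence-matrix and linear-optimization framework set up in Section~\ref{S:1}. Once that single case is in hand, the corollary follows with no further computation, so I expect no additional obstacle beyond what Theorem~\ref{T:3.1} already overcomes.
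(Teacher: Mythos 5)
Your proposal is correct and matches the paper exactly: the corollary is stated there as an immediate consequence of Mar\'oti's Theorem~1.1 in \cite{M} (covering all odd $n \neq 9$) together with Theorem~\ref{T:3.1} ($\sigma(S_9) = 256 = 2^{9-1}$). Your case split and the observation that $256 = 2^{8}$ closes the gap is precisely the paper's (one-line) argument, so nothing further is needed.
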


To prove the main result of this section, we need the following proposition.

\begin{prop}\label{P:3.2} The $84$ subgroups of $MS3$ form a minimal covering of the elements with cycle structure $(3,6)$ in $S_9$. 
\end{prop}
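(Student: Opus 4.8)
The plan is to prove the two halves separately: that the $84$ subgroups of $MS3$ cover all permutations of cycle type $(3,6)$, and that no smaller family of proper subgroups can. For the covering I would start from the observation that every $g$ of cycle type $(3,6)$ determines a distinguished $3$-set $T_g$, the support of its $3$-cycle, which is the unique $g$-invariant $3$-subset of $\{1,\dots,9\}$ since the orbits of $g$ have sizes $3$ and $6$. Thus $g$ preserves the partition $\{T_g,\,\{1,\dots,9\}\setminus T_g\}$ and so lies in the member of $MS3\cong S_3\times S_6$ stabilising $T_g$, and in no other member of $MS3$. Hence the $(3,6)$-elements are partitioned across the $84=\binom{9}{3}$ subgroups of $MS3$, each containing exactly $2\cdot 5!=240$ of them; since $84\cdot 240 = 9!/(3\cdot 6)$ is the total number of such elements, $MS3$ covers the class and, being a partition, does so irredundantly. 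This already gives the upper bound for this class.

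For minimality I would first record which maximal subgroups meet the class, using the reduction to maximal subgroups from the Introduction. The orbit argument above shows that an intransitive maximal subgroup $S_a\times S_b$ contains $(3,6)$-elements only when $\{a,b\}=\{3,6\}$; the point stabiliser $S_8$ and the $2$- and $4$-set stabilisers contain none (a $(3,6)$-element has no invariant set of size $1$, $2$, or $4$), and $A_9$ contains none because $(3,6)$ is odd. The one imprimitive class $S_3\Wr S_3$ (blocks of size $3$) does contain such elements: a short count by block action shows each such subgroup contains $288$ of them, namely $72$ when the induced action on the three blocks is a transposition and $216$ when it is a $3$-cycle. The remaining maximal subgroups are primitive (of affine type $\AGL(2,3)$ and almost simple type), and a routine check shows that none of them meets a single fiber $T$ in more than $24$ of its $240$ elements; the same bound holds for each $S_3\Wr S_3$. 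Consequently $M_T\in MS3$ is the unique proper subgroup meeting fiber $T$ in more than $24$ elements.

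The difficulty, and the reason this is not a one-line count, is that $S_3\Wr S_3$ contains $288>240$ class elements, so it is per-subgroup \emph{more} efficient than a member of $MS3$. Hence the naive bound $20160/288$ — and, more seriously, any fractional certificate — cannot prove a lower bound exceeding $70$: because $M_T$ acts transitively on its $240$ elements, the optimal symmetric weighting is forced to be uniform, and uniform weights violate the $S_3\Wr S_3$ constraint, so the linear-programming optimum is exactly $70$. There is therefore a genuine integrality gap, and neither a weighting argument nor a dual matching/independent-set certificate (itself capped at the fractional optimum $70$) can reach $84$. What survives is the integer-level fact of the previous paragraph: since $M_T$ is the only proper subgroup with more than $24$ elements of fiber $T$, covering $T$ without $M_T$ forces at least $\lceil 240/24\rceil=10$ further subgroups, each contributing only a thin slice to a few other fibers. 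I would encode the whole situation as the set-cover problem whose points are the $20160$ class elements (grouped into the $84$ fibers) and whose sets are the maximal subgroups listed above, and then solve the associated binary integer program with Gurobi, as described in the Introduction, to certify that its optimum is $84$. The main obstacle is precisely this lower bound: because of the integrality gap it must come from exact integer optimisation rather than from any closed-form count or LP-duality estimate, with the per-fiber bound of $24$ and the contribution of the primitive subgroups as the routine inputs to that computation.
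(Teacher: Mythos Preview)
Your proposal is correct and takes essentially the same approach as the paper: reduce to the three relevant classes $MS3$, $MS6$, $MS7$, encode the covering of the $(3,6)$-elements as a binary integer program, and certify the optimum $84$ with Gurobi. Your surrounding analysis---the partition into $84$ fibers, the per-fiber bound of $24$ for the other classes, and the observation that the LP relaxation gives only $70$ so that an integrality gap forces exact integer optimisation---is correct and illuminating (indeed $70$ is exactly the root-relaxation value reported in the paper's Gurobi log), but the minimality itself is established by the same computation in both your argument and the paper's.
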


\begin{proof}  We prove this computationally with the help of the software GAP \cite{Ga} and Gurobi \cite{Gu}.  Using the GAP program as given in
Function \ref{F:8.1} for $G = S_9$ and the conjugacy classes $MS3$, $MS6$, and $MS7$ of maximal subgroups, we are setting up the equations readable by Gurobi for the
elements of type $(3,6)$.  The Gurobi output shows that a minimal covering of these elements consists of $84$ subgroups from $MS3$, $MS6$, and $MS7$.  Since the
elements with cycle structure $(3,6)$ are partitioned into the subgroups of MS3, these 84 subgroups constitute a minimal covering of these elements.
\end{proof}

We note that the GAP output addressed in the above proposition as well as an abbreviated Gurobi output of these calculations is given at the end of
Section \ref{S:8}.  For further details we refer to \url{http://www.math.binghamton.edu/menger/coverings/}.  Now we are ready to prove our theorem.

\begin{proof}[Proof of Theorem \ref{T:3.1}]  We will show first that there exists a covering of $S_9$ by $256$ subgroups.  As can be seen with
the help of GAP \cite{Ga}, the $9$-cycles in $S_9$ are only contained in $A_9$, the only subgroup in $MS1$.  Thus it suffices to show that the odd
permutations generating maximal cyclic subgroups can be covered by $255$ subgroups.

As can be seen from Table 3.2, listing the odd permutations generating maximal cyclic subgroups in $S_9$, the elements with cycle structure
$(4,5)$ and $(2,7)$ are only contained in the subgroups of $MS2$ and $MS4$, respectively.  Since these elements are partitioned into the subgroups
of the respective classes, the full classes have to be added to the covering.  As one can see from Table 3.2, the odd permutations generating
maximal cyclic subgroups not covered by the subgroups of $MS2$ and $MS4$ are the $8$-cycles and the elements with cycle structure $(3,6)$.  Thus
adding the subgroups of $MS3$ and $MS5$ to those of $MS1$, $MS2$ and $MS4$ provides a covering of $S_9$.  We conclude 
\begin{equation*}
\sigma(S_9) \leq |MS1| + |MS2| + |MS3| + |MS4| + |MS5| = 256.
\end{equation*}

It remains to be shown that any covering of $S_9$ contains at least $256$ subgroups.  As pointed out earlier, none of the subgroups of $MS1$, $MS2$
and $MS4$ can be omitted since the respective elements are partitioned into these subgroups.  The $8$-cycles are partitioned into the nine
subgroups of $MS5$ with $5040$ elements in each subgroup.  On the other hand, each such element is
contained in two subgroups of $MS7$ with $108$ $8$-cycles in each subgroup.  Obviously, replacing subgroups from $MS5$ by
those from $MS7$ increases the number of subgroups needed for covering these elements.  Hence the nine subgroups of MS5 constitute a
minimal covering of the $8$-cycles.

\begin{center}
\begin{tabular}{c|c|c|c}
Label &Isomorphism Type &Group Order &Class Size\\
\hline
$MS1$ &$A_9$ &18140 &1\\
$MS2$ &$S_4 \times S_5$ &2880 &126\\
$MS3$ &$S_3\times S_6$ &4320 &84\\
$MS4$ &$S_2 \times C_7$ &10080 &36\\
$MS5$ &$S_8$ &40320 &9\\
$MS6$ &$S_3 \Wr S_3$ &1296 &280\\
$MS7$ &$ \AGL(2,3)$ &432 &840\\
\end{tabular}
\vskip 10pt

Table 3.1.  Conjugacy classes of maximal subgroups of $S_9$.
\end{center}
\vskip 20pt

\begin{center}
\begin{tabular}{c|c|c|c|c|c|c|c|c}
Order &C.S &Size &$MS2$ &$MS3$ &$MS4$ &$MS5$ &$MS6$ &$MS7$\\
\hline
4 &$(2^2,4)$ &11340 &$180_2$ &$270_2$ &$630_2$ &$1260,P$ &$162_4$ &0\\
6 &(2,3) &2520 &$220_{11}$ &$270_9$ &$490_7$ &$1120_4$ &$36_4$ &0\\
6 &$(2,3^2)$ &10080 &$160_2$ &$360_3$ &$280,P$ &$1120,P$ &$36,P$ &0\\
6 &6 &10080 &0 &$120,P$ &$840_3$ &$3360_3$ &$36,P$ &$56_2$\\
6 &$(2^3,3)$ &2520 &$60_3$ &$30,P$ &$210_3$ &0 &$36_4$ &0\\
6 &(3,6) &20160 &0 &$240,P$ &0 &0 &$288_4$ &$72_3$\\
8 &8 &45360 &0 &0 &0 &$5040,P$ &0 &$108_2$\\
10 &(2,5) &18144 &$144,P$ &$432_2$ &$1008_2$ &$4032_2$ &0 &0\\
12 &(3,4) &15120 &360 &$180,P$ &$420,P$ &$3360_2$ &0 &0\\
14 &(2,7) &25920 &0 &0 &$720,P$ &0 &0 &0\\
20 &(4,5) &18144 &$144,P$ &0 &0 &0 &0 &0\\
\end{tabular}
\vskip 10pt

Table 3.2  Inventory of odd permutations generating maximal cyclic subgroups 

in $S_9$ across conjugacy classes of maximal subgroups.
\end{center}
\vskip 20pt
 
On the other hand, the elements with cycle structure $(3,6)$ are partitioned into the $84$ subgroups of $MS3$ with $240$ elements in each subgroup
and each such element is contained in four subgroups of $MS6$ with $288$ elements in each subgroup.  Thus potentially there could be an
arrangement that the elements with cycle structure $(3,6)$ in six subgroups of $MS3$ can be covered by five subgroups from $MS6$.  However, as
shown in Proposition \ref{P:3.2}, this is not the case, and the $84$ subgroups of $MS3$ constitute a minimal covering of these elements. Moreover, the only class of subgroups containing both $8$-cycles and elements with cycle structure $(3,6)$ is $MS7$.  However, each subgroup of $MS7$ contains a combined total of $180$ $8$-cycles and elements with cycle structure $(3,6)$, and so cannot possibly be a better cover than using $MS3$ and $MS5$, in each of which a subgroup covers at least $240$ such elements. 
We conclude $\sigma(S_9) > 255$ and thus $\sigma(S_9) = 256$.  
\end{proof}

\section{The Symmetric Group $S_{10}$}
\label{S:4}

In this section we determine the covering number of $S_{10}$.  It turns out to be less than the upper bound of $2^{10-2}$ given by Mar{\'o}ti in
\cite{M}.

\begin{thm}  The covering number of $S_{10}$ is $221$.
\end{thm}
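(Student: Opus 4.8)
The plan is to follow the template established for $S_8$ and $S_9$: first exhibit an explicit covering of $S_{10}$ achieving $221$ subgroups by taking full conjugacy classes of maximal subgroups chosen so that every element generating a maximal cyclic subgroup is covered, and then prove minimality by analyzing how efficiently each ``hard'' cycle structure can be covered. As with the earlier cases, I would begin by building the analogue of Tables 2.1 and 3.1 (the conjugacy classes of maximal subgroups of $S_{10}$, with isomorphism types, orders, and class sizes) and the analogue of Tables 2.2 and 3.2 (the inventory of generators of maximal cyclic subgroups, recording for each cycle structure how its elements distribute across the maximal subgroup classes, flagging partitioned classes with $P$ and overlapping ones with the $s_k$ notation). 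These tables, obtained via GAP \cite{Ga}, drive the entire argument. The $10$-cycles are contained only in $A_{10}$, so $MS1$ must be in any cover; I would then identify which cycle structures are partitioned into a single class (forcing that whole class into the cover) versus which occur with overlaps among several classes.

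For the upper bound I would select classes whose union contains every generator of a maximal cyclic subgroup, summing their class sizes to $221$. The more delicate half is the lower bound, and here the crucial new ingredient flagged in the introduction is the Erd\H{o}s--Ko--Rado Theorem \cite{E}. For at least one cycle structure --- almost certainly the one whose elements lie in a class isomorphic to a product such as $S_5 \times S_5$ or in an imprimitive wreath-product class $S_5 \Wr S_2$ --- the elements do \emph{not} simply partition, so a naive count of ``elements per subgroup'' is insufficient. Instead I would label each relevant element and each relevant maximal subgroup by an associated set (e.g.\ a $5$-subset of $\{1,\dots,10\}$ determined by the block structure or by the support of a cycle), reinterpret covering via the incidence matrix $A$ and vectors $x(W), y(W)$ exactly as set up in the introduction, and recognize that a minimal cover corresponds to a maximal intersecting family. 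The Erd\H{o}s--Ko--Rado bound then yields a sharp lower bound on the number of subgroups required to cover that cycle structure, matching the number supplied by the chosen class.

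The main obstacle, as in the $S_9$ proof, is showing that one cannot ``trade'' an expensive partitioned class for a cheaper combination drawn from overlapping classes. Concretely, for each partitioned class $MSi$ forced into $\mathcal{C}$, I must verify that no competing class covers the same elements with fewer subgroups: this requires comparing, for every cycle structure, the maximum number of those elements contained in a single subgroup of any maximal class against the number forced by the partition. The EKR application handles the one genuinely combinatorial case, while the remaining cases reduce to the kind of counting argument used for $S_8$ --- arguing that replacing subgroups of one class by those of another (e.g.\ trading a class covering $s$ elements per subgroup for one covering only $s' < s$) can only increase the count. I would then assemble these per-cycle-structure bounds, show the contributions are essentially independent because the hard cycle structures force disjoint parts of the cover, and conclude that any cover has at least $221$ subgroups, so $\sigma(S_{10}) = 221$. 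The hardest single step is setting up the EKR correspondence correctly so that the combinatorial optimum genuinely coincides with the group-theoretic covering number for that cycle structure.
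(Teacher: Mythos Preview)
Your overall architecture is right --- build the two tables, identify forced classes, and use Erd\H{o}s--Ko--Rado for the one genuinely combinatorial step --- but several concrete details are wrong and would derail the argument.

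First, $10$-cycles are odd permutations, so they are \emph{not} in $A_{10}$; in fact (see Table~4.2) they lie only in $MS6$, $MS7$, $MS8$, and it is the $126$ subgroups of $MS7\cong S_5\Wr S_2$ that give their minimal cover. What forces $A_{10}$ into the cover is the even cycle type $(3,7)$, which lies only in $MS1$ and $MS3$.

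Second, and more seriously, the optimal cover is \emph{not} a union of full conjugacy classes. No choice of whole classes sums to $221$ while covering everything. The actual cover is $MS1\cup MS5\cup MS7\cup\overline{\mathcal D}$, where $\overline{\mathcal D}$ consists of only $84$ of the $120$ subgroups in $MS3\cong S_3\times S_7$. The entire point of the EKR step is to show that one may \emph{delete} $36$ subgroups from $MS3$ and still cover all elements of type $(3^2,4)$, but no more: each subgroup $H(k_1,k_2,k_3)\in MS3$ is labelled by a $3$-subset of $\{0,\dots,9\}$, each $(3^2,4)$-element lies in exactly the two members $H(u),H(u')$ with $u\cap u'=\emptyset$, and so the removable subgroups form an intersecting family of $3$-sets, bounded by $\binom{9}{2}=36$ via EKR. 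Thus the relevant labelling is by $3$-subsets of a $10$-set attached to $MS3$, not $5$-subsets attached to $S_5\Wr S_2$, and the hard cycle structure is $(3^2,4)$. Your plan to ``sum full class sizes to $221$'' for the upper bound, and to apply EKR to a $5$-subset family, would not reach the target and needs to be reworked along these lines.
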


Before we can prove Theorem 4.1, we have to establish some preparatory results involving combinatorics and incidence matrices leading to an application
of a result due to Erd\H{o}s, Ko and Rado \cite{E} (see also Theorem 5.1.2 in \cite{A}).

\begin{thm}{\cite{E}}  
Let $A_1,\hdots,A_m$ be $m$ $k$-subsets of an $n$-set $S$, $k\leq \frac{1}{2}n$, which are pairwise nondisjoint. Then
$m \leq \binom{n-1}{k-1}$.  The upper bound for $m$ is best possible.  It is attained when the $A_i$ are precisely the
$k$-subsets of $S$ which contain a chosen fixed element of $S$.
\end{thm}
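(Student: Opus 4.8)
The plan is to prove this via Katona's cyclic permutation (rotation) method, which yields the exact bound $\binom{n-1}{k-1}$ in one stroke and makes the sharpness transparent. Write $S = \{1, \hdots, n\}$ and regard a \emph{cyclic arrangement} of $S$ as a placement of its elements into $n$ positions around a circle; call a $k$-subset an \emph{arc} of such an arrangement if its elements occupy $k$ consecutive positions. The crux is the following combinatorial lemma: in any fixed cyclic arrangement, at most $k$ of the pairwise nondisjoint sets $A_1, \hdots, A_m$ can simultaneously be arcs. Once this lemma is in hand, a double-counting argument over all arrangements produces the bound immediately.

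To prove the lemma I would fix one arc $A$ of the family, say occupying positions $1, \hdots, k$, and analyze the other arcs meeting it. An arc distinct from $A$ meets $A$ precisely when it overlaps the right end or the left end of $A$; concretely these are the arc starting at position $i+1$ and the arc ending at position $i$, for $i = 1, \hdots, k-1$. I would pair these into $k-1$ pairs $\{\,[i+1, i+k],\ [i-k+1, i]\,\}$. The two arcs in each pair occupy the $2k$ consecutive positions from $i-k+1$ to $i+k$ without overlapping, so they are disjoint exactly when $2k \le n$; this is where the hypothesis $k \le \tfrac{1}{2}n$ is used, and checking this disjointness is the main technical point of the whole argument. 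Since the family is pairwise nondisjoint, it can contain at most one arc from each of the $k-1$ pairs, and together with $A$ itself this gives at most $k$ arcs, proving the lemma.

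With the lemma established, I would count incidences between arrangements and members of the family. Summing over all $n!$ cyclic arrangements, each contributes at most $k$ arcs from the family, so the number of incident pairs is at most $n! \cdot k$. On the other hand, a fixed set $A_j$ is an arc in exactly $n \cdot k! \cdot (n-k)!$ arrangements, obtained by choosing one of the $n$ cyclic blocks of consecutive positions, ordering the $k$ elements of $A_j$ inside it, and ordering the remaining $n-k$ elements outside. Equating the two counts gives $m \cdot n \cdot k! \cdot (n-k)! \le n! \cdot k$, and since $\frac{n!\, k}{n\, k!\, (n-k)!} = \binom{n-1}{k-1}$, we obtain $m \le \binom{n-1}{k-1}$. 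Finally, for sharpness I would simply exhibit the extremal family: fixing an element $s \in S$ and taking all $k$-subsets containing $s$ gives a pairwise nondisjoint family (every pair shares $s$) of size $\binom{n-1}{k-1}$, since the remaining $k-1$ elements are chosen freely from $S \setminus \{s\}$. This shows the bound is attained and completes the proof.
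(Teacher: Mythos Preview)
The paper does not give its own proof of this statement: it is quoted as Theorem~4.2 with a citation to \cite{E} and then applied as a black box in the analysis of $S_{10}$, so there is nothing in the paper to compare your argument against.

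That said, your proof is correct. What you have written is Katona's circle method, and the details are sound: the $2k-2$ arcs other than $A$ that meet $A$ are exactly those you list, your pairing of them into $k-1$ pairs of arcs occupying disjoint blocks of $k$ consecutive positions is valid precisely under the hypothesis $2k\le n$, and the double count over the $n!$ labelled circular placements gives $m\cdot n\cdot k!\,(n-k)! \le n!\cdot k$, i.e.\ $m\le\binom{n-1}{k-1}$. The extremal family you exhibit shows sharpness. One small remark: be explicit that position indices are taken modulo $n$ when you write intervals like $[i-k+1,i]$, since for small $i$ these wrap around the circle; this does not affect the argument but clarifies why the two arcs in a pair really are disjoint when $2k\le n$.
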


In our application we consider the following incidence structure.  We let 
\begin{equation*}
U = \{(k_1,k_2,k_3) : k_1,k_2,k_3\in \{0,1,\hdots,9\}\ \text{ and }\ k_1 < k_2 < k_3)\}
\end{equation*}
and 
\begin{equation*}
V = \{(u,u') : u,u'\in U\ \text{ with }\ u\cap u' = \emptyset\}.
\end{equation*}
We define an incidence relation between $U$ and $V$ as follows.  For $v = (u,u') \in V$ we say $v\in u_j$ if $u =
u_j$ or $u' = u_j$, and $v\not\in u_j$ otherwise.  For this choice of $U$ and $V$ we make the following claim.

\begin{prop} 
Let $U$, $V$ and the incidence relation between them defined as above.  Then there exists a subcollection $W^*$ of $U$
with $|W^*| = 84$ which covers $V$ and every subcollection $W$ of $U$ with $|W| < |W^*|$ does not cover $V$.
Specifically, $W^*$ can be chosen as $U-D$, where 
\begin{equation*}
D = \{(0,k_2,k_3) : k_2,k_3 \in \{1,2,\hdots,9\}, k_2 < k_3\}.
\end{equation*}
\end{prop}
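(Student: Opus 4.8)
The plan is to recast the covering condition as a vertex-cover problem for an associated graph and then invoke the Erd\H{o}s--Ko--Rado theorem \cite{E}. First I would observe that $U$ is simply the collection of all $3$-subsets of the $10$-element ground set $\{0,1,\dots,9\}$, so $|U| = \binom{10}{3} = 120$, and that by construction each $v = (u,u') \in V$ is incident with exactly the two members $u$ and $u'$ of $U$. Thus, building the graph $\Gamma$ whose vertex set is $U$ and whose edge set is precisely the collection of disjoint pairs recorded in $V$, a subcollection $W \subseteq U$ covers $V$ if and only if every edge of $\Gamma$ has at least one endpoint in $W$; that is, if and only if $W$ is a vertex cover of $\Gamma$.

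Next I would apply the standard complementation for vertex covers: $W$ is a vertex cover of $\Gamma$ exactly when $U \setminus W$ spans no edge, i.e., when $U \setminus W$ is an independent set of $\Gamma$. Unwinding the definition of the edges, an independent set is precisely a family of $3$-subsets of $\{0,\dots,9\}$ that are pairwise nondisjoint. (Two distinct $3$-sets are adjacent iff they are disjoint, and a $3$-set always meets itself, so ``pairwise intersecting'' is the correct reading.) Hence minimizing $|W|$ over all covers of $V$ is equivalent to maximizing $|U \setminus W|$ over all pairwise-intersecting families of $3$-subsets of a $10$-set.

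At this point the Erd\H{o}s--Ko--Rado theorem applies directly with $n = 10$ and $k = 3$, since $k = 3 \le \tfrac{1}{2}n = 5$: any pairwise nondisjoint family of $3$-subsets has size at most $\binom{n-1}{k-1} = \binom{9}{2} = 36$. Consequently every cover $W$ satisfies $|W| = 120 - |U \setminus W| \ge 120 - 36 = 84$, which is the required lower bound. For the matching construction I would take the explicit extremal family supplied by the equality case of the theorem, namely all $3$-subsets containing the fixed element $0$; this is exactly the set $D$ displayed in the statement, with $|D| = \binom{9}{2} = 36$. Since all members of $D$ contain $0$, they are pairwise intersecting, so $D$ is independent in $\Gamma$, its complement $W^* = U - D$ is a vertex cover, and $|W^*| = 120 - 36 = 84$. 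This exhibits a cover attaining the lower bound, proving it minimal.

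The reformulation itself is the crux: once the disjointness graph $\Gamma$ is in place, the entire proposition collapses to a single application of Erd\H{o}s--Ko--Rado, so I expect no serious obstacle beyond setting up the correspondence carefully. The one point deserving a line of genuine care is the bidirectional equivalence between ``$W$ covers $V$'' and ``$U \setminus W$ is pairwise intersecting,'' since it is exactly this equivalence that transfers the Erd\H{o}s--Ko--Rado bound from the independent-set side over to the covering side and pins down $84$ as both an upper and a lower bound.
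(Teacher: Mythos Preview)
Your proof is correct and is essentially the same argument as the paper's: both show that $W$ covers $V$ if and only if the complement $U\setminus W$ is a pairwise-intersecting family of $3$-subsets of a $10$-set, and then apply Erd\H{o}s--Ko--Rado with $n=10$, $k=3$ to bound such a family by $\binom{9}{2}=36$, yielding $|W|\ge 84$ with equality witnessed by $W^*=U-D$. The only difference is packaging---you phrase it as vertex cover versus independent set in the disjointness graph, whereas the paper works directly with the incidence matrix---but the mathematical content is identical.
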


\begin{proof}  We have $|U| = 120$ and $|V| = 2100$.  Thus the incidence matrix $A$ of $U$
and $V$ is a $2100 \times 120$ matrix with exactly two entries equal to 1 in each row, since\\ $u_j =
((k_1,k_2,k_3) , (k'_1,k'_2,k'_3)) \in v_i$ if and only if $(k_1,k_2,k_3) = u_j$ or $(k'_1,k'_2,k'_3) = u_j$.  With
$x(U) = (1,\hdots,1)^T$ we have $Ax(U) = (2,\hdots,2)^T$.  Let $u,u' \in U$ with $u\cap u' = \emptyset$ and let $X =
U-\{u,u'\}$.  Then $y(X)$ contains a zero entry and $v = (u,u')$ is not covered by $X$. On the other hand, removing
any subset $\{u_1,\hdots,u_t\}$ of $U$ with pairwise non-trivial intersection, i.e.\ $u_i \cap u_j \neq \emptyset$, then
for $X = U-\{u_1,\hdots,u_t\}$ the vector $y(X)$ has all non-zero entries.  The largest number of sets we can remove from $U$ has
the cardinality of a maximal set with pairwise non-trivial intersection. Applying Theorem 4.2 with $n = 10$ and $k =
3$, we obtain $\binom{9}{2} = 36$ for the cardinality of such a set.  Specifically, $D = \{(0,k_1,k_2) : k_1 < k_2,
k_1,k_2 \in \{1,\hdots,9\}\}$ is such a set.  Let $W^* = U-D$.  Then $y(W^*)$ has all entries $> 0$. On the other
hand, for any set $W$ with $|W| < |W^*|$ there exist $u,u' \in \overline{W}$, the complement of $W$ in $U$, such that 
$u\cap u' =
\emptyset$ and thus $y(W)$ has at least one zero entry.
\end{proof}

The following corollary establishes a minimal covering of the elements of type $(3^2,4)$ by certain subgroups from
$MS3$ (see Table 4.2).  Since these subgroups are isomorphic to $S_3 \times S_7$, we can label them by the letters fixed by the
respective $S_7$.  Hence we have 
\begin{equation*}
MS3 = \{H(k_1,k_2,k_3) : k_1,k_2,k_3 \in \{0,1,\hdots,9\},k_1 < k_2 < k_3\}.
\end{equation*}

\begin{cor}
Let ${\mathcal D} = \{H(0,k_2,k_3) : k_2,k_3\in\{1,2,\hdots,9\},k_2 < k_3\}$.  Then $\overline{\mathcal D} = MS3 -
{\mathcal D}$, the complement of ${\mathcal D}$ in $MS3$, is a minimal covering of the elements of type $(3^2,4)$
in $S_{10}$.  
\end{cor}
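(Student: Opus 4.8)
The plan is to show that the covering problem for the $(3^2,4)$-elements by subgroups from $MS3$ is \emph{literally} the incidence-covering problem solved in the preceding Proposition, and then read off the conclusion. Under the labeling $MS3 = \{H(k_1,k_2,k_3)\}$, I identify each subgroup $H(k_1,k_2,k_3)$ with the triple $u = (k_1,k_2,k_3) \in U$; since $H(k_1,k_2,k_3) \cong S_3 \times S_7$ is the stabilizer of the partition of $\{0,\dots,9\}$ into $\{k_1,k_2,k_3\}$ and its complement, this is a bijection between $MS3$ and $U$ carrying $\mathcal{D}$ to $D$ and $\overline{\mathcal{D}}$ to $W^* = U - D$.

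First I would determine exactly which subgroups of $MS3$ contain a given element $\sigma$ of type $(3^2,4)$. A permutation lies in $H(u) = S_u \times S_{\overline{u}}$ precisely when it preserves the set $u$, equivalently when $u$ is a union of cycles of $\sigma$. Because $\sigma$ consists of two $3$-cycles and one $4$-cycle, the only $3$-element subsets of $\{0,\dots,9\}$ that are unions of cycles of $\sigma$ are the supports of its two $3$-cycles. Hence $\sigma$ lies in exactly the two subgroups $H(u)$ and $H(u')$, where $u, u'$ are the (disjoint) supports of the two $3$-cycles of $\sigma$. This is precisely the incidence ``$v \in u_j$ iff $u_j \in \{u, u'\}$'' of the preceding Proposition, with $v = (u, u') \in V$.

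This sets up the correspondence completely. Grouping the $(3^2,4)$-elements by the unordered pair $\{u, u'\}$ of their $3$-cycle supports partitions them into fibers indexed by $V$; each fiber has $2 \cdot 2 \cdot 6 = 24$ members (two $3$-cycles on each of $u, u'$ and six $4$-cycles on the remaining four points), consistent with $|V| \cdot 24 = 2100 \cdot 24 = 50400$, the number of $(3^2,4)$-elements. A subcollection $W \subseteq MS3$ covers an entire fiber if and only if it covers the corresponding $v \in V$, because $H(u)$ contains \emph{all} $(3^2,4)$-elements having a $3$-cycle supported on $u$. Thus $W$ covers the $(3^2,4)$-elements if and only if the corresponding subcollection of $U$ covers $V$. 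Applying the preceding Proposition, the minimum size of such a $W$ is $84$, attained by $W^* = U - D$; translating back, $\overline{\mathcal{D}} = MS3 - \mathcal{D}$ covers the $(3^2,4)$-elements, and no smaller subcollection of $MS3$ does, which is the assertion.

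The only real work, and the step I expect to be the crux, is the containment computation of the second paragraph: verifying that membership of $\sigma$ in $H(u)$ forces $u$ to be the support of one of the $3$-cycles of $\sigma$, and hence that each element meets exactly two members of $MS3$ indexed by disjoint triples. Once this cycle-support analysis is in hand, the identification of the two incidence structures is exact, the fiber count confirms that nothing is lost in the translation, and the Corollary is immediate from the preceding Proposition.
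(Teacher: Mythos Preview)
Your proposal is correct and follows essentially the same approach as the paper: both reduce the covering question for $(3^2,4)$-elements by subgroups in $MS3$ to the abstract incidence problem of Proposition~4.3 by showing that each such element lies in exactly the two subgroups $H(u)$, $H(u')$ indexed by the disjoint supports of its two $3$-cycles, and that the resulting fibers over $V$ each have size $24$. The only cosmetic difference is that the paper obtains the count $24$ as six cyclic subgroups of order $12$ times four generators each, whereas you count directly as $2\cdot 2\cdot 6$; the identification with Proposition~4.3 and the conclusion are identical.
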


\begin{proof}  By Table 4.2, there are $50400$ elements of type $(3^2,4)$ in $S_{10}$.  Each $H(k_1,k_2,k_3)\in MS3$
contains 840 such elements and each element of type $(3^2,4)$ is in exactly two subgroups of $MS3$.  There are
exactly six cyclic subgroups generated by elements of type $(3^2,4)$ in
the intersection of $H(u)$ and $H(u')$ with $u = (k_1,k_2,k_3)$ and $u' = (k'_1,k'_2,k'_3)$ with each such cyclic
subgroup of order 12 containing four elements of type $(3^2,4)$.  Thus any two members $H(u)$ and $H(u')$ of $MS3$
with $u\cap u' = \emptyset$ share exactly 24 elements of type $(3^2,4)$.  The six cyclic subgroups of order 12 can be
represented as $\langle t \cdot c_{4i}\rangle$, $i = 1,2,3$, where $t = u\cdot u'$ or $u^{-1}\cdot u'$ and $c_{4i}$ is a
4-cycle in $\{j_1,j_2,j_3,j_4\}$, the complement of $\{k_1,k_2,k_3,k'_1,k'_2,k'_3\}$ in $\{0,1,\hdots,9\}$, specifically
$c_{41} = (j_1,j_2,j_3,j_4)$, $c_{42} = (j_1,j_3,j_2,j_4)$ and $c_{43}= (j_1,j_2,j_4,j_3)$.  For $u\cap u' = \emptyset$ we
consider 
\begin{equation*}
T(u,u') = \{g\in \langle t\cdot c_{4i}\rangle : t=u\cdot u'\ \text{ or }\  u^{-1}u'; i = 1,2,3;|g| = 12\}.
\end{equation*}
We have $|T(u,u')| = 24$.  The $50400$ elements of type $(3^2,4)$ are partitioned into the $2100$ equivalence classes
$T(u,u')$.  Identifying $MS3$ with $U$ of Proposition 4.3 and setting $V = \{T(u,u');u\cap u' = \emptyset\}$, we have the
same incidence structure as in Proposition 4.3 and the conclusion of the corollary follows immediately.
\end{proof}

Now we are ready to prove Theorem 4.1.

\begin{proof}[Proof of Theorem 4.1]  By Table 4.2 and Corollary 4.4 we see that\break $MS1 \cup MS5 \cup MS7\cup 
\bar{\mathcal D} =
S_{10}$, since all permutations generating maximal cyclic subgroups in $S_{10}$ are contained in the union of these
subgroups.  Hence 
\begin{equation*}
\sigma(S_{10}) \leq |MS1\cup MS5 \cup MS7 \cup \bar{\mathcal D}| = 1 + 10 + 126 + 84 = 221.
\end{equation*}

It remains to be shown that the covering obtained by the $221$ subgroups is minimal, i.e.\ $\sigma(S_{10}) \geq 221$.   First we will show that the $126$
subgroups of $MS7$ and any nine subgroups of $MS5$ constitute a minimal covering of the odd permutations generating maximal cyclic subgroups and not
involved in $MS3$.  We observe that the $10$-cycles are partitioned into the three conjugacy
classes $MS6$, $MS7$ and $MS8$.  Since $MS7$ contains only $126$ subgroups versus the $945$ and $2520$ subgroups,
respectively, of the other two classes, the $126$ subgroups of $MS7$ constitute a minimal covering of the $10$-cycles in
$S_{10}$.  Next we will show that any nine subgroups of $MS5$ are a minimal covering of the
$8$-cycles in $S_{10}$.  It is obvious that a minimal covering of the 8-cycles cannot be obtained from using subgroups from $MS4$, $MS6$
or $MS8$.  Let $MS5 = \{S^{(i)}_9;i=0,\hdots,9\}$ with $S^{(i)}_9 \cong S_9$ and fixed point $i$.  Any $8$-cycle 
in $S_{10}$ has two fixpoints, say $i_1$ and $i_2$.  After removing $S^{(i)}_9$, all $8$-cycles in $S_{10}$ are still
covered by the remaining subgroups in $MS5$.  Removing an additional subgroup from $MS5$, say $S^{(i_2)}_9$, leaves
those $8$-cycles with fixed points $i_1$ and $i_2$ uncovered.  Thus any nine subgroups of $MS5$ constitute a minimal
covering of the $8$-cycles in $S_{10}$.  It can be seen now from Table 4.2 that all odd permutations generating maximal cyclic subgroups and not
involved in $MS3$ are covered by the subgroups of $MS7$ and any nine subgroups of $MS5$.  

By Corollary 4.4, the $84$ subgroups of $\overline {\mathcal D}$ constitute a minimal covering of the elements of type $(3^2,4)$.  We observe now that the $219$
subgroups of $MS7 \cup \overline {\mathcal D} \cup {\mathcal C}_0$, where ${\mathcal C}_0 = \{S^{(i)}_9;i=1,2,\hdots,9\}$ constitute a minimal covering of the
elements of type $(3^2,4)$, the $10$-cycles and the $8$-cycles, since these elements are mutually not contained in the respective subgroups covering the
other types of elements.  However, not all odd permutations generating maximal cyclic subgroups are contained in ${MS7} \cup \overline{\mathcal D} \cup {\mathcal
C}_0$, specifically the elements of type $(2,7)$ and $(3,6)$ with fixpoint 0.  Adding $S^{(10)}_9$ to the covering, we obtain that the $220$ subgroups of
$MS7\cup MS5\cup \overline {\mathcal D}$ minimally cover the odd permutations of $S_{10}$ generating maximal cyclic subgroups.  A look at Table 4.2 shows that
the only even permutations generating maximal cyclic subgroups and not contained in $MS5$ and $MS7$ are the elements of type $(3,7)$.  They are
partitioned into $MS3$ and $MS1$.  Adding the single subgroup of $MS1$, which is isomorphic to $A_{10}$, to the cover yields $\sigma(S_{10}) \geq |MS1
\cup MS5\cup MS7\cup \overline {\mathcal D}| = 221$.  This together with the above leads to $\sigma(S_{10}) = 221$.  
\end{proof}

\begin{center}
\begin{tabular}{c|c|c|c}
Label &Isomorphism Type &Group Order &Class Size\\
\hline
$MS1$ &$A_{10}$ &1814400 &1\\
$MS2$ &$S_4\times S_6$ &17280 &210\\
$MS3$ &$S_3 \times S_7$ &30240 &120\\
$MS4$ &$S_2\times S_8$ &80640 &45\\
$MS5$ &$S_9$ &362880 &10\\
$MS6$ &$S_2 \Wr S_5$ &3840 &945\\
$MS7$ &$S_5 \Wr S_2$ &28800 &126\\
$MS8$ &$\text{P}\Gamma\text{L}(2,9)$ &1440 &2520\\
\end{tabular}
\vskip 10pt

Table 4.1.  Conjugacy classes of maximal subgroups of $S_{10}$.
\end{center}
\vskip 10pt

{\fontsize{8}{10pt}\selectfont
\begin{center}
\begin{tabular}{c|c|c|c|c|c|c}
Order &C.S. &Size &$MS1$ &$MS2$ &$MS3$ &$MS4$\\
\hline
ODD & & & & & & \\
\hline
4 &$(2^2,4)$ &56700 &0 &$1080_4$ &$1890_4$ &$3780_3$\\
4 &$(2,4^2)$ &56700 &0 &$540_2$ &0 &$1260,P$\\
6 &$(2^3,3)$ &25200 &0 &$480_4$ &$840_4$ &$1680_3$\\
6 &$(2,3^2)$ &50400 &0 &$1200_5$ &$1680_4$ &$2240_2$\\
6 &$(2^2,6)$ &75600 &0 &$360,P$ &0 &$3360_2$\\
6 &$(3,6)$ &201600 &0 &$960,P$ &$1680,P$ &0\\
8 &8 &226800 &0 &0 &0 &$5040,P$\\
10 &10 &362880 &0 &0 &0 &0\\
12 &$(3^2,4)$ &50400 &0 &$240,P$ &$840_2$ &0\\
14 &$(2,7)$ &259200 &0 &0 &$2160,P$ &$5760,P$\\
20 &$(4,5)$ &181440 &0 &$964,P$ &0 &0\\
30 &$(2,3,5)$ &120960 &0 &0 &$1008,P$ &$2688,P$\\
\hline
EVEN & & & & & & \\
\hline
6 &$(2,6)$ &151200 &$P$ &$720,P$ &$72520_2$ &$6720_2$\\
8 &$(8,2)$ &226800 &$P$ &0 &0 &$5040,P$\\
9 &9 &403200 &$P$ &0 &0 &0\\
12 &$(4,6)$ &151200 &$P$ &$720,P$ &0 &0\\
12 &$(2,3,4)$ &151200 &$P$ &$1440_2$ &$2520_2$ &$3360,P$\\
21 &$(3,7)$ &172800 &$P$ &0 &$1440,P$ &0\\
\end{tabular}
\vskip 10pt

\begin{tabular}{c|c|c|c|c|c|c}
Order &C.S. &Size &$MS5$ &$MS6$ &$MS7$ &$MS8$\\
\hline
ODD & & & & & &\\
\hline
4 &$(2^2,4)$ &56700 &$11340_2$ &$180_3$ &$900_2$ &0\\
4 &$(2,4^2)$ &56700 &0 &$300_5$ &$1800_4$ &$90_4$\\
6 &$(2^3,3)$ &25200 &$2520,P$ &0 &$600_3$ &0\\
6 &$(2,3^2)$ &50400 &$10080_2$ &$160_3$ &$800_2$ &0\\
6 &$(2^2,6)$ &75600 &0 &$240_3$ &$2400_4$ &0\\
6 &$(3,6)$ &201600 &$20160,P$ &0 &0 &$240_3$\\
8 &8 &226800 &45360 &$240,P$ &0 &$180_2$\\
10 &10 &362880 &0 &$384,P$ &$2880,P$ &$144,P$\\
12 &$(3^2,4)$ &50400 &0 &$160_3$ &0 &0\\
14 &$(2,7)$ &259200 &$25920,P$ &0 &0 &0\\
20 &$(4,5)$ &181440 &$18144,P$ &0 &$1440,P$ &0\\
30 &$(2,3,5)$ &120960 &0 &0 &$960,P$ &0\\
\hline
EVEN & & & & & & \\
\hline
6 &$(2,6)$ &151200 &$30240_2$ &$160,P$ &0 &0\\
8 &$(8,2)$ &226800 &0 &$240,P$ &$3600_2$ &$180_2$\\
9 &9 &403200 &$40320,P$ &0 &0 &0\\
12 &$(4,6)$ &151200 &0 &$160,P$ &$2400_2$ &0\\
12 &$(2,3,4)$ &151200 &$15120,P$ &0 &$1200,P$ &0\\
21 &$(3,7)$ &172800 &0 &0 &0 &0\\
\end{tabular}
\vskip 10pt

Table 4.2.  Inventory of elements generating maximal cyclic subgroups 

in $S_{10}$ across conjugacy classes of maximal subgroups.
\end{center}
}

\section{The Symmetric Group $S_{12}$}
\label{S:5}

In \cite{M}, Mar{\'o}ti gives an upper bound for the covering number of $S_{12}$, which is lower than the general upper bound given there.  We will show here that this bound
is indeed the covering number of $S_{12}$.  

\begin{thm}\label{thm:S12} The covering number of $S_{12}$ is $761$.
\end{thm}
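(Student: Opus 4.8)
The plan is to follow the same two-sided strategy used for $S_8$, $S_9$, and $S_{10}$: exhibit an explicit covering of size $761$ and then prove no smaller covering can exist by reducing to a minimal covering of certain ``hard'' classes of maximal cyclic generators. Since the upper bound $761$ is already established by Mar\'oti in \cite{M}, the real work is the lower bound $\sigma(S_{12}) \ge 761$, so I would concentrate there. First I would assemble the conjugacy classes of maximal subgroups of $S_{12}$ (from GAP \cite{Ga}) and, for each cycle structure generating a maximal cyclic subgroup, record its distribution across those classes, exactly in the format of Tables 2.2, 3.2, and 4.2. The decomposition into odd and even permutations is crucial: the $12$-cycles (even) live only in $A_{12}$, so the single subgroup $MS1$ must appear in every cover, and it remains to cover the odd maximal cyclic generators by $760$ subgroups.

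Next I would identify, from the distribution table, those cycle structures that are \emph{partitioned} into a single class of maximal subgroups --- for such a class the entire class must be added to any cover, contributing its full size and forcing those subgroups in. These ``partition'' constraints should account for the bulk of the $760$ subgroups. The remaining difficulty, as in the earlier sections, is the handful of cycle structures (the odd ones playing the role of $(2^2,4)$ in $S_8$, of $(3,6)$ and the $8$-cycles in $S_9$, and of $(3^2,4)$ in $S_{10}$) that appear in several classes and must be covered by the most efficient combination of subgroups. For each such class of elements I would argue, by comparing how many such elements a single subgroup from each candidate class contains, that no cross-class substitution reduces the count; the key inequality is that replacing a subgroup that partitions the elements by subgroups from another class always increases the number of subgroups needed, because the latter contain fewer elements each and each element lies in several of them.

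The main obstacle, and where a direct combinatorial argument may fail, is the optimization over those cycle structures distributed across multiple classes with nontrivial overlaps. Where a clean combinatorial identity is available --- as the Erd\H{o}s--Ko--Rado bound supplied in the $S_{10}$ case via Proposition 4.3 and Corollary 4.4 --- I would isolate the relevant incidence structure and invoke it directly. Where the combinatorics is too irregular for a closed-form argument, I would, exactly as in Proposition \ref{P:3.2} for $S_9$ and as announced for $M_{12}$ and $J_1$, set up the covering as an integer linear program: the incidence matrix between the relevant subgroup representatives and the maximal cyclic generators is prepared by GAP \cite{Ga} and the minimal covering is certified by Gurobi \cite{Gu}. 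The output establishes that each hard class forces its stated number of subgroups and that these minimal subcovers use mutually distinct subgroups.

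Finally I would verify that the subcovers forced by the distinct cycle structures are disjoint --- that is, no single maximal subgroup is doing double duty across two of the constraints --- so that the forced counts \emph{add}. Summing the contributions of the partitioned classes, the multiply-covered classes (handled combinatorially or by the integer program), and the mandatory $A_{12}$, the total should come to exactly $761$. Combined with Mar\'oti's upper bound, this yields $\sigma(S_{12}) = 761$. I expect the disjointness bookkeeping and the certification of minimality for the multiply-distributed classes to be the most delicate points; everything else reduces to reading off the distribution table and applying the same ``replacement increases the count'' principle that drives the $S_8$, $S_9$, and $S_{10}$ proofs.
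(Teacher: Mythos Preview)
Your plan has the right shape but contains a factual error that would derail it: a $12$-cycle is an \emph{odd} permutation (an $n$-cycle has sign $(-1)^{n-1}$), so $12$-cycles do not lie in $A_{12}$ and cannot be used to force $MS1$ into the cover. In the paper the $12$-cycles are contained only in the imprimitive classes $MS5$, $MS8$, $MS9$, $MS10$ and in $MS11\cong\PGL(2,11)$; a counting argument shows that the $462$ subgroups of $MS5$ (the class $S_6\Wr S_2$) form the unique minimal cover of the $12$-cycles. The alternating group $A_{12}$ is forced instead by the elements of cycle structure $(5,7)$, which lie only in $MS1$ and $MS7$.

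Beyond this error, the paper's proof for $S_{12}$ is entirely by elementary counting --- no integer linear program and no Erd\H{o}s--Ko--Rado argument is needed. Five specific cycle structures $(12)$, $(3,4,5)$, $(2,5^2)$, $(4,7)$, $(5,7)$ are shown, one by one, to be uniquely minimally covered by the classes $MS5$, $MS4$, $MS3$, $MS2$, $MS1$ respectively, using only comparisons of the number of elements of the given type contained in a single subgroup from each candidate class. The only cross-class subtlety is that $MS7\cong S_7\times S_5$ meets four of these five cycle types simultaneously; a direct count of how many such elements one $MS7$-subgroup contains shows it can never substitute profitably for subgroups from $MS1$--$MS4$. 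Since the five classes $MS1$--$MS5$ already form a cover (Mar\'oti's upper bound), the conclusion follows. So the Gurobi step you anticipate is unnecessary here, and the disjointness bookkeeping you flag reduces to the single observation that $MS7$ is the only class touching more than one of the five chosen cycle types.
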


\begin{proof}
As noted by Mar{\'o}ti \cite[p. 104]{M}, the covering number of $S_{12}$ is at most 761, since $S_{12}$ may be written as the union of all subgroups conjugate to 
$S_6 \Wr S_2$, $S_{11} \times S_{1}$, $S_{10} \times S_2$, $S_9 \times S_3$, and $A_{12}$, which correspond to the classes $MS5$, $MS2$, $MS3$, $MS4$, and $MS1$, respectively, of 
Table 5.1.  Indeed, we will show that this is in fact a minimal cover of $S_{12}$ by demonstrating that there is a particular class of maximal cyclic subgroups that is minimally covered by one of these five classes. 

First, we examine the elements with cycle structure $(12)$, i.e., the $12$-cycles of $S_{12}$.  It is not hard to see that the classes of maximal subgroups containing 
$12$-cycles are all imprimitive subgroups in the classes $MS5$, $MS8$, $MS9$, and $MS10$ (a $12$-cycle preserves such an imprimitive decomposition of twelve elements), and also the 
subgroups of class $MS11$.  Moreover, it is easy to see that the $12$-cycles must be partitioned in each of the classes $MS5$, $MS8$, $MS9$, and $MS10$, respectively, since a 
$12$-cycle stabilizes a unique imprimitive decomposition of twelve elements.  Since the $12$-cycles are partitioned among the subgroups in classes $MS5$, $MS8$, $MS9$, and 
$MS10$, respectively, and $MS5$ has the fewest number of subgroups, removing $n$ subgroups from $MS5$ from the cover would require at least $n+1$ replacements from the other 
classes.  On the other hand, the $12$-cycles are not partitioned in $MS11$, and simple computation using GAP \cite{Ga} shows that each such subgroup contains $220$ different 
$12$-cycles.  Removing even one subgroup from $MS5$, which contains $86400$ different $12$-cycles, would require at least $\lceil 86400/220 \rceil = 393$ different subgroups from $MS11$ to replace it.  Since there are only $462$ total subgroups in $MS5$, it is easy to see that the unique minimal covering of the maximal cyclic subgroups generated by $12$-cycles uses the $462$ subgroups from $MS5$.

Next, we examine the elements with cycle structure $(3,4,5)$.  These elements are only contained in the classes $MS4$, $MS6$, and $MS7$.  Since elements with this cycle structure 
preserve a unique intransitive partition of twelve elements into one set of size nine (by the $5$-cycle and the $4$-cycle) and one set of size three (by the $3$-cycle), 
the elements with cycle structure $(3,4,5)$ are partitioned among the subgroups of $MS4$.  Similar reasoning shows that these elements are also partitioned in $MS6$ and $MS7$, 
respectively.  Arguing as we did for the $12$-cycles above, we see that the unique minimal covering of these elements uses the $220$ subgroups from the class $MS4$. 

We now examine the elements with cycle structure $(2,5^2)$.  These elements are only contained in the classes $MS3$ and $MS7$.  While these elements are partitioned in class $MS3$, they are not partitioned in class $MS7$.  On the other hand, each subgroup of $MS3$ contains $72576$ elements with cycle structure $(2,5^2)$, whereas each subgroup in class $MS7$ contains $12096$ elements with this cycle structure.  Hence removing any collection of subgroups from $MS3$ requires at least $72576/12096 = 6$ times as many subgroups from $MS7$, and the unique minimal covering of these elements uses the $66$ subgroups from $MS3$.

Looking at the elements with cycle structure $(4,7)$, we see that these are contained only in subgroups of the classes $MS2$, $MS6$, and $MS7$.  As with elements examined above, these are partitioned among these three classes, so we see that the unique minimal covering of these elements uses the $12$ subgroups from $MS2$.

Finally, we examine the elements with cycle structure $(5,7)$.  These elements are contained in the subgroups of the classes $MS1$ and $MS7$, and they are partitioned among 
the subgroups of $MS7$.  Since $MS1$ only contains one subgroup (the alternating group $A_{12}$), the unique minimal cover of the elements with cycle structure $(5,7)$ uses the 
single subgroup from $MS1$.

It only remains to be shown now that no collection of subgroups from $MS7$ is a more efficient cover of some elements with cycle structure $(3,4,5)$, $(2, 5^2)$, $(4,7)$, 
and $(5,7)$ collectively than those listed above.  First, in order to cover all the elements with cycle structure $(5,7)$ which are contained in $A_{12}$, the single 
subgroup in $MS1$, we would need all $792$ subgroups of $MS7$, which is larger than our bound of $761$.  To cover the elements that are lost when a single subgroup of 
$MS2$ isomorphic to $S_{11}$ is removed, $330$ subgroups of $MS7$ are required.  However, the $462$ subgroups of $MS5$ are still needed, so this is a total of $792$ subgroups, 
more than our current bound of $761$.  Hence we need only consider the elements with cycle structure $(3,4,5)$ and $(2,5^2)$.  However, one subgroup of $MS7$ contains $10080$ elements with cycle structure $(3,4,5)$ and $12096$ elements with cycle structure $(2,5^2)$ for a total of $22176$ elements of one of these two types, whereas one subgroup of $MS3$ contains $72576$ elements with cycle structure $(2,5^2)$, and one subgroup of $MS4$ contains $36288$ elements with cycle structure $(3,4,5)$.  Since the elements are partitioned across $MS3$ and $MS4$, this shows that no collection of subgroups of $MS7$ can possibly be a more efficient cover. 

Putting this all together, we see that each of the classes $MS1$, $MS2$, $MS3$, $MS4$, and $MS5$ is necessary in a minimal cover;  on the other hand, these five classes together 
form a cover.  Therefore, these five classes together form the unique minimal cover of the elements of $S_{12}$, and the covering number of $S_{12}$ is $761$.
\end{proof}

\begin{center}
\begin{tabular}{c|c|c|c}
Label &Isomorphism Type &Group Order &Class Size\\
\hline
$MS1$ & $A_{12}$ & 239500800 & 1\\
$MS2$ & $S_{11} (\times S_{1})$ & 39916800 & 12\\
$MS3$ & $S_{10} \times S_2$ & 7257600 & 66\\
$MS4$ & $S_9 \times S_3$ & 2177280 & 220\\
$MS5$ & $S_6 \Wr S_2$ & 1036800 & 462\\
$MS6$ & $S_8 \times S_4$ & 967680 & 495\\
$MS7$ & $S_7 \times S_5$ & 604800 & 792\\
$MS8$ & $S_4 \Wr S_3$ & 82944 & 5775\\
$MS9$ & $S_2 \Wr S_6$ & 46080 & 10395\\
$MS10$ & $S_3 \Wr S_4$ & 31104 & 15400\\
$MS11$ & $\PGL(2,11) $ & 1320 & 362880\\
\end{tabular}
\vskip 10pt

Table 5.1.  Conjugacy classes of maximal subgroups of $S_{12}$.
\end{center}

\section{The Mathieu Group $M_{12}$}
\label{S:6}

Only as recently as 2010, it was shown by Holmes and Mar{\'o}ti in \cite{HM} that for the Mathieu group $M_{12}$ we have $131 \leq \sigma(M_{12}) \leq 222$.  Here we will
determine the exact covering number of $M_{12}$.

\begin{thm}\label{T:6.1}  The covering number of $M_{12}$ is $208$.
\end{thm}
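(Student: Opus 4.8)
The plan is to follow the strategy used for $S_9$ in Theorem \ref{T:3.1}, reducing the problem to covering the generators of the maximal cyclic subgroups and resolving the unavoidable overlaps by integer linear programming. First I would use GAP \cite{Ga} to produce two tables analogous to Tables 2.1 and 2.2: a first table listing the conjugacy classes $MS1, MS2, \dots$ of maximal subgroups of $M_{12}$ together with their isomorphism types, orders, and class sizes, and a second table recording, for each class of generators of maximal cyclic subgroups, the total number of such elements and their distribution across the classes of maximal subgroups, indicating with $P$ the classes in which the elements are partitioned and with $s_k$ those in which each element lies in $k$ representatives, each containing $s$ such elements. Since a minimal covering may be taken to consist of maximal subgroups, and since every element lies in some maximal cyclic subgroup, it suffices to cover the generators of the maximal cyclic subgroups, exactly as described in Section \ref{S:1}.

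For the upper bound, I would read off from the second table a collection $\mathcal{C}$ of $208$ maximal subgroups whose union contains every generator of a maximal cyclic subgroup, and hence all of $M_{12}$. As in the symmetric-group cases, the natural candidates are the full classes in which some class of generators is partitioned: whenever a class of generators is confined to a single conjugacy class of maximal subgroups and partitioned among its representatives, that entire class of maximal subgroups must be included, and the remaining classes of generators are then checked to be covered by these. This would give $\sigma(M_{12}) \le |\mathcal{C}| = 208$.

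The lower bound is the substance of the argument. For each class of generators that is partitioned and contained in no other conjugacy class of maximal subgroups, every representative of the containing class is forced into any cover, exactly as the classes $MS2$ and $MS4$ were forced in the proof of Theorem \ref{T:3.1}. This pins down most of $\mathcal{C}$ and reduces the question to the classes of generators that occur in several classes of maximal subgroups simultaneously. For those, I would use a GAP program of the form given in Function \ref{F:8.1} to build the incidence matrix $A$ between the relevant subgroup representatives (the objects of $U$) and the elements to be covered (the objects of $V$), precisely as in Section \ref{S:1}, and then seek the smallest subcollection $W \subseteq U$ for which $y(W) = Ax(W)$ has all coordinates positive, using Gurobi \cite{Gu}. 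The Gurobi output would certify that no arrangement drawn from the overlapping classes covers these elements with fewer subgroups than the forced classes do, so that the total cannot fall below $208$.

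The main obstacle I anticipate is precisely this optimization step. Unlike the case of $S_{10}$, where the overlaps carry enough symmetry to be resolved cleanly by the Erd\H{o}s--Ko--Rado theorem (Theorem 4.2), the incidence structure arising for $M_{12}$ admits no such clean combinatorial description, so one must rely on the solver to rule out every potentially more efficient replacement. The argument therefore hinges on correctly encoding the incidence data, ensuring that exactly the generators of maximal cyclic subgroups appear as the elements of $V$ and that every class capable of covering them appears among the sets of $U$, and on the Gurobi computation returning an optimum that matches the value $208$ attained by $\mathcal{C}$. Once the lower and upper bounds coincide, we conclude $\sigma(M_{12}) = 208$.
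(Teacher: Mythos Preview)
Your outline follows the paper's overall strategy (tables of maximal subgroups and of generators of maximal cyclic subgroups, then a forced/optimization split), and the lower-bound portion is close to what the paper does via Proposition \ref{P:6.2}. The gap is in your upper bound.

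You propose to ``read off from the second table a collection $\mathcal{C}$ of $208$ maximal subgroups'' by taking full conjugacy classes in which certain generators are partitioned. That will not reach $208$. From the table one finds that $MS1$ and $MS4$ (of sizes $12$ and $66$) together cover every class of generators except the elements of cycle structure $(6,6)$, and the $(6,6)$ elements lie only in $MS5$, $MS8$, $MS10$, $MS11$. Taking any single full class here gives at best $|MS5|=144$, so the table alone yields $12+66+144=222$, which is exactly the Holmes--Mar{\'o}ti upper bound, not $208$. The paper's point---and it stresses this as the reason $\sigma(M_{12})$ resisted earlier attempts---is that the minimal cover of the $(6,6)$ elements is \emph{not} a full conjugacy class: Gurobi finds a $130$-subgroup cover drawn from \emph{three} different classes ($120$ from $MS5$, $8$ from $MS8$, $2$ from $MS10$), and this is what produces the upper bound $12+66+130=208$. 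In other words, the optimization step you place under ``lower bound'' is needed simultaneously for the upper bound: Gurobi both exhibits the $130$-subgroup cover and certifies its minimality in one run.

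A second, smaller issue: your forcing criterion (``a class of generators confined to a single conjugacy class of maximal subgroups and partitioned among its representatives'') is never satisfied in $M_{12}$. The $11$-cycles are partitioned in each of $MS1$, $MS2$, $MS5$, and the $(2,10)$ elements in each of $MS3$, $MS4$, $MS8$; the bound of $12$ (respectively $66$) comes from comparing class sizes among these options, and one must check that the classes covering the $(6,6)$ elements are disjoint from those achieving the minima for $11$-cycles and $(2,10)$. Your forcing argument as stated would pin down nothing.
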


Before we can prove this theorem, we need a proposition which gives a minimal covering for the elements with cycle structure $(6,6)$.  (We note that $M_{12}$ is represented here as a permutation group
embedded into $S_{12}$.)  In fact, the minimal cover found contains subgroups from three different conjugacy classes of subgroups.  This seems to be a first in this context and explains why the covering number for the group $M_{12}$ was not determined any earlier despite its
relatively small order.  The use of GAP and Gurobi led to this breakthrough.

\begin{prop}\label{P:6.2}  There exists a covering of the elements with cycle structure $(6,6)$ in $M_{12}$ by $130$ subgroups, and
this covering is minimal.  This covering is made up of $120$ subgroups isomorphic to $\PSL(2,11)$ from $MS5$, eight subgroups isomorphic to $C_2\times S_5$ from $MS8$, and two
subgroups isomorphic to $(C_4 \times C_4) : D_6$ in $MS10$.
\end{prop}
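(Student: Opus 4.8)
The plan is to cast this as an instance of the set-covering problem of Section \ref{S:1}, taking $V$ to be the set of all elements of $M_{12}$ of cycle structure $(6,6)$ and $U$ to be the collection of maximal subgroups containing at least one such element. The first step is to use GAP \cite{Ga} to compute the conjugacy classes of maximal subgroups of $M_{12}$ in its natural degree-$12$ representation and, for each class, to determine both the number of $(6,6)$-elements contained in a class representative and the number of subgroups of that class in which a fixed such element lies. This yields the incidence matrix $A = (a_{ij})$, with rows indexed by the $(6,6)$-elements and columns by the relevant maximal subgroups, together with the distribution data recorded in Table 6.2. In particular, this determines exactly which classes ($MS5$, $MS8$, $MS10$, and any others meeting $V$) may contribute to a cover.

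With $A$ in hand I would proceed exactly as in the proof of Proposition \ref{P:3.2}: assign a binary variable $x_j$ to each column (subgroup) and minimize $\sum_j x_j$ subject to the covering constraints $Ax \geq (1,\dots,1)^T$, so that every $(6,6)$-element is forced into at least one chosen subgroup. The constraints are generated by the GAP routine of Function \ref{F:8.1} and passed to Gurobi \cite{Gu}, which should return both the optimal value $130$ and an explicit optimal solution. The last step of the upper-bound argument is to read off this solution, confirm that it is composed of the $120$ subgroups isomorphic to $\PSL(2,11)$ from $MS5$, the eight isomorphic to $C_2 \times S_5$ from $MS8$, and the two isomorphic to $(C_4 \times C_4):D_6$ from $MS10$ named in the statement, and to verify directly in GAP that these $130$ subgroups cover every element of cycle structure $(6,6)$.

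The main obstacle is the lower bound: establishing that no collection of fewer than $130$ subgroups covers $V$. For $S_8$, $S_9$, and $S_{10}$ the optimal cover of a troublesome cycle type is drawn from a single conjugacy class, so a counting bound on the number of elements per subgroup, a partition argument, or the Erd\H{o}s-Ko-Rado Theorem suffices. Here, by contrast, the optimal cover genuinely mixes three distinct classes, so none of these elementary reductions applies and the naive per-subgroup counting bounds fall well short of $130$. The lower bound therefore rests on Gurobi solving the integer program to \emph{certified} optimality rather than on any closed-form estimate, and this is precisely the step that makes the computation delicate and that explains why the covering number of $M_{12}$ was not determined earlier. To keep the integer program tractable I would first restrict $U$ to the classes that actually meet $V$, and where possible exploit symmetries, such as the outer automorphism of $M_{12}$ fusing the two classes of $\PSL(2,11)$, to reduce the effective number of columns before invoking the solver.
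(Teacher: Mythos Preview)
Your proposal is correct and follows essentially the same route as the paper: feed the $(6,6)$-elements and the relevant maximal-subgroup classes into the GAP routine of Function~\ref{F:8.1}, hand the resulting binary set-cover program to Gurobi, and let the solver certify optimality at $130$ with the stated composition. One small factual slip in your side remark on symmetry reduction: by Table~6.1 there is a \emph{single} conjugacy class $MS5$ of $\PSL(2,11)$ subgroups in $M_{12}$ (of size $144$), not two classes fused by the outer automorphism, so that particular reduction is unavailable---but this does not affect the argument itself.
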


\begin{proof} Using the GAP \cite{Ga} program listed in Function \ref{F:8.1} 
for the elements with cycle structure $(6,6)$ in $M_{12}$ and the appropriate maximal 
subgroups of $M_{12}$, Gurobi \cite{Gu} finds that there exists a covering of the elements with cycle structure $(6,6)$ by 130 subgroups in $MS5$, $MS8$, 
$MS10$, and that this covering is minimal.  
\end{proof}

A list of generators for the subgroups of $M_{12}$ contained in this can be found on line at \url{http://www.math.binghamton.edu/menger/coverings/}.  Now we are ready to prove our theorem.

\begin{center}
\begin{tabular}{c|c|c|c}
Label &Isomorphism Type &Group Order &Class Size\\
\hline
$MS1$ &$M_{11}$ &7920 &12\\
$MS2$ &$M_{11}$ &7920 &12\\
$MS3$ &$\PGammaL(2,9)$ &1440 &66\\
$MS4$ &$\PGammaL(2,9)$ &1440 &66\\
$MS5$ &PSL(2,11) &660 &144\\
$MS6$ &$(C_3\times C_3) : (C_2 \times S_4)$ &432 &220\\
$MS7$ &$(C_3\times C_3) : (C_2 \times S_4)$ &432 &220\\
$MS8$ &$S_5 \times C_2$ &240 &396\\
$MS9$ &$2^{1+4} : S_3$ &192 &495\\
$MS10$ &$(C_4 \times C_4) : D_{12}$ &192 &495\\
$MS11$ &$A_4 \times S_3$ &72 &1320\\
\end{tabular}
\vskip 10pt

Table 6.1.  Conjugacy classes of maximal subgroups of $M_{12}$.
\end{center}
\vskip 20pt

\begin{center}
\begin{tabular}{c|c|c|c|c|c|c|c}
Order &C.S. &Size &$MS1$ &$MS2$ &$MS3$ &$MS4$ &$MS5$\\
 & & &(12) &(12) &(66) &(66) &(144)\\
\hline
6 &(2,3,6) &15840 &$1320,P$ &$1320,P$ &$240,P$ &$240,P$ &0\\
6 &(6,6) &7920 &0 &0 &0 &0 &$110_2$\\
8 &(8,2) &11880 &0 &1980 &0 &360 &0\\
8 &(4,8) &11880 &1980 &0 &360 &0 &0\\
10 &(2,10) &9504 &0 &0 &$144,P$ &$144,P$ &0\\
11 &(11) &17280 &$1440,P$ &$1440,P$ &0 &0 &$120,P$\\
\end{tabular}
\vskip 10pt

\begin{tabular}{c|c|c|c|c|c|c|c|c}
Order &C.S. &Size &$MS6$ &$MS7$ &$MS8$ &$MS9$ &$MS10$ &$MS11$\\
 & &  &(220) &(220) &(396) &(495) &(495) &(1320)\\
\hline
6 &(2,3,6) &15840 &144 &144 &0 &$32,P$ &0 &24\\
6 &(6,6) &7920 &0 &0 &60 &0 &32 &$6,P$\\
8 &(8,2) &11880 &0 &108 &0 &$24,P$ &$24,P$ &0\\
8 &(4,8) &11880 &108 &0 &0 &$24,P$ &$24,P$ &0\\
10 &(2,10) &9504 &0 &0 &$24,P$ &0 &0 &0\\
11 &(11) &17280 &0 &0 &0 &0 &0 &0\\
\end{tabular}
\vskip 10pt

Table 6.2.  Inventory of elements generating maximal cyclic subgroups  

in $M_{12}$ across conjugacy classes of maximal subgroups.
\end{center}

\begin{proof}[Proof of Theorem \ref{T:6.1}]  It can be easily seen from Table 6.2 that the subgroups in $MS1$ and MS4 cover all elements in
$M_{12}$ generating maximal cyclic subgroups with the exception of elements of cycle structure $(6,6)$.  By Proposition \ref{P:6.2} there
exists a covering of the elements of cycle structure $(6,6)$ by $130$ subgroups in $MS5$, $MS8$, and $MS10$.  Thus 
\begin{equation*}
\sigma(M_{12}) \leq |MS1| + |MS4| + 130 = 12 + 66 + 130 = 208.
\end{equation*}

It remains to be shown that any covering of $M_{12}$ contains at least 208 subgroups.  As can be seen from Table 6.2, a covering of the
$11$-cycles needs to contain at least $12$ subgroups of $MS1$ or $MS2$.  Similarly, a covering of the elements of cycle structure $(2,10)$ needs to
contain the 66 subgroups of $MS3$ or $MS4$.  Since the covering of the elements with cycle structure $(6,6)$ by the $130$ subgroups from $MS5$, $MS8$, and $MS10$ is
minimal by Proposition \ref{P:6.2}, it follows that $\sigma(M_{12}) \geq 12 + 66 + 130 = 208$.  We conclude $\sigma(M_{12}) = 208$.  
\end{proof}

\section{The Janko Group $J_1$}
\label{S:7}

In \cite{H} it was shown by Holmes that $5165 \leq \sigma(J_1) \leq 5415$ for the Janko group $J_1$.  Using similar methods employed in this paper for
$S_9$ and $M_{12}$, we were able to improve these bounds.  It should be noted here that longer computation times on more powerful machines would likely
improve these bounds.

To better utilize the results from \cite{H}, we will follow Holmes and use notation from the Atlas \cite{CCNPW} rather than representing the groups as
a permutation group as done in the previous cases.  Recall that conjugacy classes of elements are named by the orders of their elements and a capital
letter.  They are written in descending order of centralizer size.  Here is our improved estimate for $\sigma(J_1)$.  

\begin{thm}
For the covering number of the Janko group $J_1$ we have $5281 \leq \sigma(J_1) \leq 5414$.  
\end{thm}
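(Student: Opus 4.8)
The plan is to establish the two bounds $5281 \le \sigma(J_1)$ and $\sigma(J_1) \le 5414$ separately, exactly as in the previous sections: the upper bound by exhibiting an explicit cover and the lower bound by a combinatorial/linear-optimization argument on the maximal cyclic subgroups. The key reference data are the conjugacy classes of maximal subgroups of $J_1$ and the distribution of elements generating maximal cyclic subgroups across those classes, both of which I would take from Holmes \cite{H} and from the Atlas \cite{CCNPW}, following the notational convention for conjugacy classes just described.

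For the upper bound, I would first identify which Atlas conjugacy classes of elements generate maximal cyclic subgroups of $J_1$ (the relevant orders are $6$, $7$, $10$, $11$, $15$, and $19$), and then determine, for each such class, which maximal subgroups contain those elements and whether the elements are partitioned or shared. Classes that are partitioned among a single class of maximal subgroups force the entire class of maximal subgroups into any cover; summing the sizes of those forced classes gives a baseline. The remaining, more difficult element-types are covered by solving a set-cover instance: following the recipe of Section \ref{S:1}, I would set up the incidence matrix between the relevant maximal subgroups (columns) and the uncovered maximal-cyclic generators (rows), encode this as a $0/1$ integer program via the GAP \cite{Ga} program in Function \ref{F:8.1}, and run Gurobi \cite{Gu} to produce a cover achieving $5414$. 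Because this is computationally the same procedure used for $M_{12}$ in Proposition \ref{P:6.2} and for $S_9$ in Proposition \ref{P:3.2}, I would cite those techniques and present the resulting cover explicitly, recording generators online as before.

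For the lower bound $5281$, I would isolate one or more element-classes whose minimal cover is provably large and mutually disjoint from the covers of the other classes. The argument mirrors Theorem \ref{thm:S12} and Theorem \ref{T:6.1}: if a class of elements is partitioned among the subgroups of a single maximal-subgroup class of size $N$, and every other maximal subgroup containing such elements contains strictly fewer of them, then at least $N$ subgroups from that class are forced, and one accumulates these forced counts across element-types that do not overlap in the subgroups covering them. The principal obstacle is that, unlike $M_{12}$ where the minimal cover was clean, the lower bound here does \emph{not} meet the upper bound, so one cannot simply add up partition counts; instead the lower bound comes from relaxing the integer program to a linear program (or solving a restricted integer program over only the hardest element-classes) and using the LP-optimal value, rounded up, as a certified lower bound of $5281$.

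The main difficulty I anticipate is precisely this gap between $5281$ and $5414$: the honest statement is that the exact covering number is \emph{not} determined, so the proof must produce a genuine feasible cover for the upper bound and a genuine optimization-based certificate for the lower bound, rather than a matching partition argument. Concretely, the hard part is the Gurobi computation for the lower bound --- the LP relaxation of the full set-cover instance for $J_1$ is large, and as the authors note, its certified optimum is sensitive to computation time and machine power, which is exactly why the bounds are stated as an interval rather than an equality. I would therefore be careful to report the lower bound as the largest value for which Gurobi \cite{Gu} returns a valid dual/LP certificate within the available computation, and to emphasize that tighter bounds would follow from longer runs.
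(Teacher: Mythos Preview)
Your overall strategy---exhibit a cover for the upper bound, use optimization for the lower bound---matches the paper's, but you miss the structural decomposition that actually makes the computation tractable and that determines the specific numbers $5281$ and $5414$. The paper does not run one large set-cover instance over all maximal-cyclic generators of $J_1$. Instead it imports from Holmes \cite{H} the fact that all $1540$ subgroups isomorphic to $C_{19}:C_6$ and all $2926$ subgroups isomorphic to $S_3\times D_{10}$ are forced into any cover, and that after these are included only the classes $11A$ and $7A$ remain uncovered; moreover Holmes shows these two residual classes can be covered using only subgroups isomorphic to $\PSL(2,11)$ and $C_2^3:C_7:C_3$ respectively. Because these two subgroup classes are disjoint, the paper runs two \emph{separate} integer programs via Function~\ref{F:8.1}. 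Gurobi does not solve either to optimality: it returns that the $11A$ instance needs between $186$ and $196$ subgroups and the $7A$ instance between $629$ and $752$. Adding the forced $1540+2926$ to the two lower bounds gives $5281$; adding to the two upper bounds gives $5414$.

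Your description of the lower bound as coming from an LP relaxation of a single global instance is therefore off in two ways. First, the baseline $4466$ comes from Holmes's combinatorial argument, not from optimization at all. Second, the residual lower bounds $186$ and $629$ are Gurobi's branch-and-bound ``best bound'' values on the two integer programs (i.e.\ the tightest certified lower bounds reached before termination), not simply the LP relaxation optima of one monolithic instance. Without the Holmes decomposition the problem would be far larger and you would not recover the stated numbers; so while your plan is not wrong in principle, it omits the ingredient that does the real work here.
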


\begin{proof}
In \cite{H} it is determined that all $1540$ maximal subgroups isomorphic to $C_{19} : C_6$ and all $2926$ maximal subgroups isomorphic to $S_3 \times D_{10}$
are needed in a minimal covering.  The only remaining elements generating maximal cyclic subgroups that need to be covered are those of type $11A$ and
$7A$.  Holmes shows in \cite{H} that only maximal subgroups isomorphic to $\PSL(2,11)$ are needed to cover all elements of type $11A$, and also only
maximal subgroups isomorphic to $C^3_2 : C_7 : C_3$ are needed to cover elements of type $7A$.  Using the GAP program \cite{Ga} as given in Function
\ref{F:8.1} for $G = J_1$ and the maximal subgroups isomorphic to $\PSL(2,11)$, 
we are setting up the equations readable by Gurobi \cite{Gu} for the elements
of type $11A$.  The Gurobi output then tells us that a minimal covering of the elements of this type consists of at least $186$ and at
most $196$ subgroups isomorphic to $\PSL(2,11)$. Similarly, preparing the linear equations for Gurobi using Function \ref{F:8.1} 
for $G = J_1$ and the maximal
subgroups isomorphic to $C^3_2 : C_7 : C_3$ for the elements of type $7A$, the Gurobi output shows that the number of subgroups of this type needed to
cover the respective elements is between $629$ and $752$.  (We have included the files produced by GAP which are read by Gurobi on \url{http://www.math.binghamton.edu/menger/coverings/}.)
Therefore, we find that the subgroup covering number of $J_1$ is between $1540 + 2926 + 629 + 186 = 5281$ and $1540 + 2926 + 752 + 196 = 5414$.
\end{proof}

\section{GAP Code}
\label{S:8}

In this section, we start with the code used in GAP \cite{Ga} to create the output files read by Gurobi \cite{Gu}.  Any solution to the system of 
equations encoded in the output corresponds to a subgroup cover of the elements, and any time the ``best objective" and the ``best bound" found by 
Gurobi are identical, Gurobi has found a minimal subgroup cover.  In short, GAP is used to create a system of linear inequalities, the optimal solution to which corresponds to a minimal cover. Gurobi then performs a linear optimization on this system of linear inequalities.

For the case of $S_9$, addressed in Proposition \ref{P:3.2}, we include the output of Function \ref{F:8.1} as well as an abbreviated table of the 
Gurobi output.
A complete table of this output can be found at \url{http://www.math.binghamton.edu/menger/coverings/}.  The corresponding output of Function \ref{F:8.1}, together with the generators for the subgroups in the minimal cover of elements with cycle structure $(6,6)$ in $M_{12}$ and the linear programs produced for $J_1$, can be found at the same website.

\begin{function}
\label{F:8.1}
GAP function to create the output files to be read by Gurobi.
\begin{footnotesize}
\begin{verbatim}
#SubgroupCoveringNumber  takes as input a group $G$, a list of 
#elements $L$, a list of maximal subgroups $M$, and the name of a 
#file of type .lp to which output is written.

SubgroupCoveringNumber:= function(G, ElementList, 
MaximalSubgroupList, filename)

local maxs, maxconjs, x, y, temp, elts, eltconjs, output, 
NumberSubgroups, NumberElements, i, j, FilteredSubgroupIndices;

#Subgroup covering number first computes all conjugate subgroups 
#of those in the list MaximalSubgroupList.

maxs:= [];

for x in MaximalSubgroupList do
	maxconjs:= ConjugateSubgroups(G,x);
	for y in maxconjs do
		Add(maxs, y);
	od;
od;

NumberSubgroups:= Length(maxs);

#All cyclic subgroups generated by the conjugates of the elements
#in ElementList are stored in the irredundant list elts.

elts:= [];

for x in ElementList do
	eltconjs:= AsList(ConjugacyClass(G,x));
	for y in eltconjs do
		if not Group(y) in elts	then
			Add(elts, Group(y));
		fi;
	od;
od;

NumberElements:= Length(elts);

#SubgroupCoveringNumber now begins writing to the output file.
#Each variable r1, r2,... represents a binary variable that takes 
#on the value 0 or 1. (A 1 represents the subgroup being included
# in the covering; a 0 means it's not included.)  

#First, we write that we want to minimize the sum of all the 
#variables, i.e., we want to minimize the number of subgroups
#included in the covering.

output := OutputTextFile( filename, false );;
  SetPrintFormattingStatus(output, false);
  AppendTo(output,"Minimize\n");

for i in [1..NumberSubgroups] do
      AppendTo(output, Concatenation( " + r", String(i)));
  od;
  AppendTo(output,"\n Subject To\n");

#For each subgroup H in elts, we require that H is a subgroup
#of at least one maximal subgroup in the covering.  This 
#corresponds to the sum over all the variables representing
#maximal subgroups containing H being at least 1.  Note that
#Gurobi interprets > as ``less than or equal."


for i in [1..NumberElements] do
	FilteredSubgroupIndices:= Filtered([1..NumberSubgroups], 
				        j -> (IsSubgroup(maxs[j],elts[i])));
	for j in FilteredSubgroupIndices do
		AppendTo(output, " + r", String(j));
	od;
	AppendTo(output, " > 1\n");
od;

#This last part specifies that each variable is ``Binary," i.e., that
#it can only take on the value 0 or the value 1.

AppendTo(output, "\\ Variables\n");
  AppendTo(output,"Binary\n");
  for i in [1..NumberSubgroups] do
      AppendTo(output, Concatenation( "r", String(i), "\n"));
  od;
  
  AppendTo(output,"End\n");
  CloseStream(output);
  return maxs;

#The function returns the list of maximal subgroups.

end;\end{verbatim}
\end{footnotesize}
\end{function}

As a sample of the output of Function \ref{F:8.1} we will show how the calculations proceed for the elements with cycle structure $(3,6)$ in the 
group $S_9$.  First, we use GAP to create a file that is readable by the optimization software Gurobi:
\begin{footnotesize}
\begin{verbatim}
gap> G:= SymmetricGroup(9);
Sym( [ 1 .. 9 ] )
gap> max:= MaximalSubgroupClassReps(G);
[ Alt( [ 1 .. 9 ] ), Group([ (1,2,3,4,5), (1,2), (6,7,8,9), (6,7) ]), 
  Group([ (1,2,3,4,5,6), (1,2), (7,8,9), (7,8) ]), 
  Group([ (1,2,3,4,5,6,7), (1,2), (8,9) ]), 
  Group([ (1,2,3,4,5,6,7,8), (1,2) ]), 
  Group([ (1,2,3), (1,2), (4,5,6), (4,5), (7,8,9), (7,8), 
      (1,4,7)(2,5,8)(3,6,9), (1,4)(2,5)(3,6) ]), 
  Group([ (4,7)(5,8)(6,9), (2,7,6)(3,4,8), (1,2,3)(4,5,6)(7,8,9) ]) ]
gap> M:= [max[3], max[6], max[7]];
[ Group([ (1,2,3,4,5,6), (1,2), (7,8,9), (7,8) ]), 
  Group([ (1,2,3), (1,2), (4,5,6), (4,5), (7,8,9), (7,8), 
      (1,4,7)(2,5,8)(3,6,9), (1,4)(2,5)(3,6) ]), 
  Group([ (4,7)(5,8)(6,9), (2,7,6)(3,4,8), (1,2,3)(4,5,6)(7,8,9) ]) ]
gap> g:= (1,2,3)(4,5,6,7,8,9);
(1,2,3)(4,5,6,7,8,9)
gap> L:= [g];
[ (1,2,3)(4,5,6,7,8,9) ]
gap> Read("Programs/SubgroupCoveringNumber.g");
gap> l:= SubgroupCoveringNumber(G,L,M, "S9.lp");;
gap> time;
218128
\end{verbatim}
\end{footnotesize}
Note that only one element with cycle structure $(3,6)$ is needed in the list $L$ since all elements with the same cycle structure are conjugate in a symmetric group.  We next use Gurobi to optimize this system of linear equations.  We have removed some lines of the output here for the sake of brevity, although the full output is available online at\\ \url{http://www.math.binghamton.edu/menger/coverings/}.

\begin{footnotesize}
\begin{verbatim}
gurobi> m = read("S9.lp")
Read LP format model from file S9.lp
Reading time = 0.09 seconds
(null): 10080 rows, 1204 columns, 80640 nonzeros
gurobi> m.optimize()
Optimize a model with 10080 rows, 1204 columns and 80640 nonzeros
Found heuristic solution: objective 423
Presolve time: 0.10s
Presolved: 10080 rows, 1204 columns, 80640 nonzeros
Variable types: 0 continuous, 1204 integer (1204 binary)

Root relaxation: objective 7.000000e+01, 2182 iterations, 0.19 seconds

    Nodes    |    Current Node    |     Objective Bounds      |     Work
 Expl Unexpl |  Obj  Depth IntInf | Incumbent    BestBd   Gap | It/Node Time

     0     0   70.00000    0  280  423.00000   70.00000  83.5%     -    0s
H    0     0                     180.0000000   70.00000  61.1%     -    0s
H    0     0                     123.0000000   70.00000  43.1%     -    0s
H    0     0                      84.0000000   70.00000  16.7%     -    0s
     0     0   70.65138    0  285   84.00000   70.65138  15.9%     -    1s
     0     0   70.78547    0  289   84.00000   70.78547  15.7%     -   14s
     0     0   70.98212    0  290   84.00000   70.98212  15.5%     -   28s
\end{verbatim}
\end{footnotesize}     

\begin{center}
      $\vdots$
\end{center}

\begin{footnotesize}
\begin{verbatim}  
   104    36   79.66667   12  272   84.00000   77.51814  7.72%  1228  389s
   345    56   82.42501    8  376   84.00000   78.08711  7.04%   839  412s
   490    28   80.48307    7  349   84.00000   78.08715  7.04%   761  430s
   606     3     cutoff    7        84.00000   78.08715  7.04%   724  441s
   698     2   79.66022    9  315   84.00000   79.66022  5.17%   688  450s

Cutting planes:
  Zero half: 107

Explored 717 nodes (514185 simplex iterations) in 453.03 seconds
Thread count was 8 (of 8 available processors)

Optimal solution found (tolerance 1.00e-04)
Best objective 8.400000000000e+01, best bound 8.400000000000e+01, gap 0.0%
\end{verbatim}
\end{footnotesize}

The ``Best objective" is the best actual solution that was found by Gurobi, and the size of this solution is 84.  The ``best bound" is the size of the best lower bound that Gurobi could determine for a solution to this system of equations, and this lower bound is also 84.  Therefore, we conclude that the covering of the elements with cycle structure $(3,6)$ in $S_9$ by the $84$ subgroups in MS3 is minimal. 



\end{document}